\newtheorem{thm}{Theorem}[section]
\newtheorem{lemma}[thm]{Lemma}
\newtheorem{obs}[thm]{Observation}
\newtheorem{cor}[thm]{Corollary}
\theoremstyle{plain}
\theoremstyle{definition}
\newtheorem{rem}[thm]{Remark}
\numberwithin{equation}{section}
\def\bu{\bullet}
\def\sq{\square}
\def\zz{\mathbb Z}
\def\nn{\mathbb N}
\def\rr{\mathbb R}
\def\sm{\smallsetminus}
\def\Om{\Omega}
\def\la{\lambda}
\def\al{\alpha}
\def\be{\beta}
\def\vp{\varphi}
\def\ca{\mathcal A}
\def\cb{\mathcal B}
\def\cd{\mathcal D}
\def\ch{\mathcal H}
\def\cP{\mathcal P}
\def\cQ{\mathcal Q}
\def\cR{\mathcal R}
\def\cV{\mathcal V}
\def\ssuq{\subseteq}
\def\<{\langle}
\def\>{\rangle}
\def\La{\Lambda}
\def\SYT{ {\text {\rm SYT} } }
\def\IT{ {\text {\rm I\ts T} } }
\def\LE{ {\text {\rm LE} } }
\def\0{{\mathbf 0}}
\def\.{\hskip.06cm}
\def\ts{\hskip.03cm}
\def\bbw{\textbf{\textit{w}}}
\def\bbv{\textbf{\textit{v}}}
\def\bm{\textbf{\textit{m}}}
\def\br{{r}}
\def\bba{{\text{\bf a}}}
\def\bbb{{\text{\bf b}}}
\def\SP{{\textup{\textsf{\#P}}}}
\def\nin{\noindent}
\title[Hook inequalities]{Hook inequalities}
\date{}
\author[Igor Pak \and Fedor Petrov \and Viacheslav Sokolov]{Igor~Pak$^{\star}$, \ \, Fedor Petrov$^{\dagger}$, \  and  \  Viacheslav Sokolov$^{\ddagger}$}
\thanks{\thinspace ${\hspace{-.45ex}}^\star$Department of Mathematics,
UCLA, Los Angeles, CA, 90095. \,  Email: \texttt{pak@math.ucla.edu}}
\thanks{\thinspace ${\hspace{-.45ex}}^\dagger$Steklov Mathematical Institute, St.\ Petersburg, Russia. \,  Email: \texttt{fedyapetrov@gmail.com}}
\thanks{\thinspace ${\hspace{-.45ex}}^\ddagger$Steklov Mathematical Institute, St.\ Petersburg, Russia. \,   Email: \texttt{vi.soksok@gmail.com}}
\thanks{\thinspace \
\today}
\begin{document}

\begin{abstract}
We give an elementary proof of the recent \emph{hook inequality} given in~\cite{MPP-asy}:
$$
\prod_{u\in \lambda} \. h(u) \, \le \, \prod_{u\in \lambda} \. h^\ast(u),
$$
where $h(u)$ is the usual hook in Young diagram~$\la$, and $h^\ast(i,j)=i+j-1$.
We then obtain a large variety of similar inequalities and their high-dimensional generalizations.
\end{abstract}

\maketitle

\section*{Instroduction}

In Enumerative Combinatorics, the results are usually easy to state.
Essentially, you are counting the number of certain combinatorial
objects: exactly, asymptotically, bijectively or otherwise.
Judging the importance of the results is also relatively easy:
the more natural or interesting the objects are, and the stronger
or more elegant is the final formula, the better.  In fact,
the story or the context behind the results is usually superfluous
since they speak for themselves.  In the words of Gian-Carlo Rota,
one of the founding fathers of modern enumerative combinatorics:

\medskip

\begin{center}\begin{minipage}{12cm}%
\emph{``Combinatorics is an honest subject. $[$\ts..\ts$]$
You count balls in a box, and you either have the right number
or you haven't.''~\cite{Rota-Los-Alamos}
}\end{minipage}\end{center}

\medskip

\nin
It is only occasionally the context makes the difference; this paper
is an exception.

\smallskip

The subject of this paper are certain new combinatorial inequalities
for hook numbers of Young diagrams, rooted trees and their generalizations.
In two special cases these inequalities are known, have technical proof,
and arise in somewhat different areas.  Not only we give
elementary proofs of these results, we also setup a new framework
which allows us to obtain their advanced generalizations.  Viewed
by themselves, our most general inequalities are overwhelming the
senses -- they are just too far removed from anything the reader
would know and recognize as interesting (see Section~\ref{s:gen}).

\smallskip

We structure the paper in a way to postpone stating the most general
results until the end.  First, we tell the story of two combinatorial
inequalities which would ring a lot of bells for people in the area
(sections~\ref{s:diag} and~\ref{s:tree}).  We then introduce
combinatorial tools to prove the inequalities (\emph{majorization} and
\emph{shuffling}), see sections~\ref{s:maj} and~\ref{s:shuffle}.
We then gradually proceed with our generalizations, hoping not to
loose the reader in the process.

\bigskip

\section{First story: The number of standard Young tableaux}\label{s:diag}

\subsection{Hook-length formula}
Our story starts with the classical \emph{hook-length formula}~\cite{FRT}
for the number of \emph{standard Young tableaux} of a given shape.
This formula is ``\emph{a beautiful result in enumerative
combinatorics that is both mysterious and extremely well
studied}''~\cite{MPP1}.  We recall it first starting from
basic definitions.

Let \ts $\lambda=(\lambda_1,\ldots,\lambda_\ell)$, \ts
$\la_1\ge \ldots \ge \la_\ell> 0$ \ts
be an \emph{integer partition}. We say that $\la$ is a
\emph{partition of $n$}, write $\la \vdash n$ or $|\la|=n$,
if $\la_1+\la_2+\ldots+\la_\ell=n$.  Here $n$ is also called
the {\em size} of~$\la$.  Let $\lambda'$ denotes the
{\em conjugate partition} of $\lambda$, see Figure~\ref{f:diagram}.

\emph{Young diagram} of a partition $\la$ is a subset of
$\nn^2$ (visualized as the set of unit squares),
with $\la_1$ squares in the first row, $\la_2$ squares
in the second row, etc.  By tradition, the diagram is drawn
as in the figure, with the top left corner $(1,1)$.  For a square
$u=(i,j)$ of the Young diagram, we write $(i,j)\in \la$, where the first
coordinate increasing downward and the second from left to right.

\begin{figure}[hbt]
\begin{center}
\includegraphics[scale=0.74]{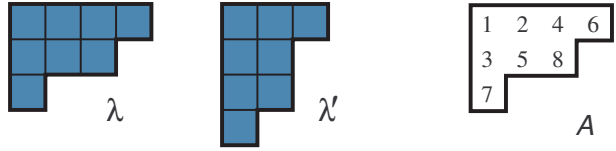}
\caption{Young diagram $\la=(4,3,1)$, conjugate diagram $\la'=(3,2,2,1)$, and
standard Young tableau $A\in \SYT(\la)$. }
\label{f:diagram}
\end{center}
\end{figure}

A {\em standard Young tableau} (SYT) of shape $\lambda$ is an
array~$A$ of shape $\lambda$ with the numbers $1,\ldots,n$, where
$n=|\lambda/\mu|$, each appearing once, increasing in rows
and columns (see Figure~\ref{f:diagram}).  Denote by $\SYT(\la)$
the set of standard Young tableaux of shape~$\la$. The number \ts
$\bigl|\SYT(\la)\bigr|$ \ts is fundamental in Algebraic Combinatorics;
it is equal to the dimension of the corresponding irreducible representation
of~$S_n$ and generalizes binomial coefficients, Catalan numbers, etc.
We refer to~\cite[Ch.~7]{EC} for the background and further references.

The \emph{hook length} $h_{\lambda}(i,j) := \la_i - i +\la_j' -j +1$
of a square $u=(i,j)\in [\la]$ is the number of squares directly
to the right or directly below~$u$, including~$u$, see Figure~\ref{f:hooks}.
The \emph{hook-length formula} states:
\begin{equation}
\label{eq:HLF} 
\bigl|\SYT(\la)\bigr| \, = \, n! \, \prod_{u\in \lambda} \. \frac{1}{h(u)}\.,
\end{equation}
(see~$\S$\ref{ss:finrem-hist}).  For example, for $\la=(4,3,1)$ as in the figure, we have:
$$
\bigl|\SYT(4,3,1)\bigr| \, = \, \frac{8!}{6\cdot 4 \cdot 4 \cdot 3 \cdot 2} \, = \. 70.
$$

\smallskip

\subsection{Hook inequalities}
We are now ready to state the first result which motivated much of our study.
Denote by $h^\ast(i,j)=i+j-1$ the \emph{anti-hook length} of the square $u=(i,j)\in\la$.
It is the number of squares directly to the left or directly above~$u$, including~$u$,
see Figure~\ref{f:hooks}.

\begin{thm}[{\cite[Prop.~12.1]{MPP-asy}}] \label{t:diag-hook}
For every Young diagram $\la$, we have:
$$
\prod_{u\in \lambda} \. h(u) \, \le \, \prod_{u\in \lambda} \. h^\ast(u).
$$
Moreover, the equality holds if and only if $\la$ has a rectangular shape.
\end{thm}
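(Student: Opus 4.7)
The plan is to derive the inequality from the stronger \emph{majorization} statement that the multiset of hooks \(H_\lambda := \{h(u) : u \in \lambda\}\) majorizes the multiset of anti-hooks \(H^*_\lambda := \{h^*(u) : u \in \lambda\}\). Granting this, the product inequality is immediate from the Hardy--Littlewood--P\'olya (Karamata) inequality applied to the strictly convex function \(f(x) = -\log x\): one obtains \(\sum_u (-\log h(u)) \ge \sum_u (-\log h^*(u))\), which is exactly \(\prod_u h(u) \le \prod_u h^*(u)\).

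I would begin by verifying the normalization. Using \(h(i,j) = \lambda_i + \lambda'_j - i - j + 1\) and \(h^*(i,j) = i + j - 1\), a direct double count yields
\[
\sum_{u \in \lambda} h(u) \;=\; \sum_{u \in \lambda} h^*(u) \;=\; \tfrac{1}{2}\!\left(\sum_i \lambda_i^2 + \sum_j (\lambda'_j)^2\right).
\]
Thus \(H_\lambda\) and \(H^*_\lambda\) have the same size \(|\lambda|\) and the same total, which is the correct base point for the majorization relation.

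The core of the argument is the majorization itself. I would attack it via the integral criterion: \(H_\lambda \succeq H^*_\lambda\) is equivalent to
\[
\sum_{u \in \lambda} (h(u) - t)_+ \;\ge\; \sum_{u \in \lambda} (h^*(u) - t)_+ \quad \text{for every } t \ge 0.
\]
The right-hand side has a transparent meaning: it counts pairs \(((i,j), s)\) with \((i,j) \in \lambda\) and \(1 \le s \le i+j-1-t\), i.e., lattice points in a staircase region sitting above the anti-diagonal \(i+j=t+1\). Expanding the left-hand side via \(h(u) = \mathrm{arm}(u) + \mathrm{leg}(u) + 1\) gives an analogous but richer lattice-point description. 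The hard step will be to construct an injection from the anti-hook configurations into the hook configurations --- a \emph{shuffling} map that redistributes each anti-diagonal cell to a hook contribution in a controlled fashion. An alternative route is induction on \(|\lambda|\): remove a removable corner, track the changes on both sides, and close the loop with the inductive hypothesis; here the bookkeeping of how hooks along the affected row and column shift is the principal technical obstacle.

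For the equality case, strict convexity of \(-\log\) in Karamata's inequality forces \(H_\lambda = H^*_\lambda\) as multisets. The largest hook is \(h(1,1) = \lambda_1 + \lambda'_1 - 1\), whereas the largest anti-hook equals \(\max_i (i + \lambda_i) - 1\). These coincide if and only if \(\lambda_{\lambda'_1} = \lambda_1\), i.e., \(\lambda\) is rectangular; otherwise \(H_\lambda\) contains a strictly larger element than any in \(H^*_\lambda\), and the multisets cannot agree. Conversely, for an \(a \times b\) rectangle the involution \((i,j) \mapsto (a+1-i,\, b+1-j)\) sends \(h\) to \(h^*\), so the two multisets coincide and equality holds.
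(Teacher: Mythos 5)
Your overall architecture matches the paper's exactly: reduce the product inequality to the majorization \(\ch_\la \trianglerighteq \ch^\ast_\la\) and apply Karamata with the strictly convex function \(-\log\); your verification of the equal-sums normalization is Observation~\ref{o:hook-diag}, and your treatment of the equality case (comparing the maximal hook \(\la_1+\la_1'-1\) with the maximal anti-hook \(\max_i(i+\la_i)-1\), plus the involution on a rectangle) is correct and in fact more explicit than what the paper writes down. The problem is that the majorization itself --- the entire mathematical content of the theorem --- is not proved. You reformulate it as the integral criterion \(\sum_u (h(u)-t)_+ \ge \sum_u (h^\ast(u)-t)_+\) and then say that ``the hard step will be to construct an injection \ldots a shuffling map,'' or alternatively that an induction on \(|\la|\) could work modulo ``bookkeeping.'' Neither route is carried out, and neither is routine: the induction in particular is genuinely delicate because removing a corner changes hooks along an entire row and column while changing anti-hooks not at all, and there is no obvious way to close the loop.

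For comparison, here is what the paper actually does at this step. It uses the subset criterion for majorization (Lemma~\ref{l:major-subsets}): it suffices that for every \(k\)-subset \(X\ssuq\la\) there is a \(k\)-subset \(Y\ssuq\la\) with \(h(Y)\ge h^\ast(X)\). The set \(Y\) is produced by \emph{shuffling}: push all squares of \(X\) to the top of their columns, then push the resulting squares to the left in their rows. Writing \(h=\al+\be+1\) and \(h^\ast=\al^\ast+\be^\ast+1\) in terms of arms and legs, the first push preserves anti-arms and makes the total leg length at least the total anti-leg length (the top-justified squares in a column have the \(k\) largest legs in that column), and the second push preserves anti-legs, does not decrease legs, and makes the total arm length at least the total anti-arm length; summing gives \(h(Y)\ge h^\ast(X)\). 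This is a short but nontrivial construction, and it is precisely the piece your proposal names but does not supply. As written, your argument proves the theorem only conditionally on the majorization.
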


\begin{figure}[hbt]
\begin{center}
\includegraphics[scale=0.74]{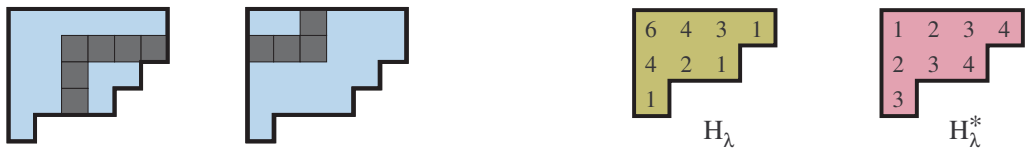}
\caption{Hook length $h(2,3)=6$, anti-hook length $h(2,3)=4$
for a partition $\la=(6,6,5,4,1)$.  Table of hook numbers \ts H$_\la=\bigl\{h(i,j)\bigr\}$
and table of anti-hook numbers \ts H$_\la^\ast=\bigl\{h^\ast(i,j)\bigr\}$ for $\la=(4,3,1)\vdash 8$. }
\label{f:hooks}
\end{center}
\end{figure}

For example, for $\la=(4,3,1)$ as in the figure, we have:
$$6\cdot 4 \cdot 4 \cdot 3 \cdot 2 \. = \. 576 \, \. \le \, \. 4 \cdot 4 \cdot 3 \cdot 3 \cdot 3 \cdot 2 \cdot 2 \. = \. 1728.
$$

\begin{rem}{\rm
The theorem is a corollary of a much more general inequality we prove
in~\cite[Thm~3.3]{MPP-asy}, which in turn is a corollary of the
\emph{Naruse hook-length formula} for the number of standard Young tableaux
of \emph{skew shape}.  This formula was discovered by Naruse in the
intersection of Representation Theory and Schubert Calculus;
see~\cite{MPP1,MPP2} for both algebraic and combinatorial proof and
further references.  We refer to~\cite{Swa} for a purely
combinatorial proof of Theorem~\ref{t:diag-hook},
from a very different point of view.
}\end{rem}

To really understand the above theorem, start with the following:

\begin{obs}\label{o:hook-diag}
For every Young diagram $\la$, we have:
$$
\sum_{u\in \lambda} \. h(u) \, = \, \sum_{u\in \lambda} \. h^\ast(u).
$$
\end{obs}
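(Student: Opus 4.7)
The plan is to establish the observation by a one-line double-counting argument, interpreting both sides as cardinalities of the same set of ordered pairs of cells in $\lambda$.

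First I would rewrite the left-hand side combinatorially. By the very definition recalled just before the statement, $h(i,j) = \lambda_i - i + \lambda'_j - j + 1$ counts the cells of $\lambda$ lying weakly to the right of $(i,j)$ in row $i$ together with the cells lying strictly below $(i,j)$ in column $j$. Equivalently, $h(u)$ is the number of cells $v\in\lambda$ lying in the hook of $u$. Summing over $u$ gives
\[
\sum_{u\in\lambda} h(u) \;=\; \#\bigl\{(u,v)\in\lambda\times\lambda \,:\, v\text{ lies in the hook of }u\bigr\}.
\]

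Next I would do the same for the right-hand side. For $u=(i,j)$, the quantity $h^\ast(u)=i+j-1$ counts the $j$ cells weakly to the left of $u$ in row $i$ together with the $i-1$ cells strictly above $u$ in column $j$; that is, the cells $v\in\lambda$ lying in the anti-hook of $u$. Therefore
\[
\sum_{u\in\lambda} h^\ast(u) \;=\; \#\bigl\{(u,v)\in\lambda\times\lambda \,:\, v\text{ lies in the anti-hook of }u\bigr\}.
\]

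Finally, the map $(u,v)\mapsto(v,u)$ is a bijection between the two sets, because the condition ``$v$ lies in the hook of $u$'' and the condition ``$u$ lies in the anti-hook of $v$'' are literally the same statement: each says that $u$ and $v$ share a row or column, with $v$ weakly to the southeast of $u$. Comparing cardinalities finishes the proof. There is essentially no obstacle here; the only thing to be careful about is handling the diagonal case $u=v$ consistently, which is automatic since both conventions include $u$ itself in its own hook and anti-hook.
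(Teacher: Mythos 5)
Your argument is correct and is essentially the paper's own proof: both interpret the two sums as counting ordered pairs of cells $(u,v)$ with $u$ and $v$ in a common row or column and $v$ weakly southeast of $u$, the only difference being that you make the swap $(u,v)\mapsto(v,u)$ explicit where the paper states the double count in one sentence. No gaps.
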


\begin{proof}
Note that both sides count the number of pairs of squares $x,y\in \la$,
such that $x$ is either directly above~$y$ or directly to the left of~$y$,
or $x=y$.
\end{proof}

There are two implications of this observation.  First, the theorem cannot
have a ``trivial'' proof by using monotonicity of anti-hooks over hooks.
Second, this suggests the distribution of hooks and anti-hooks have the
same mean, but the variance of hooks is larger than the variance
of anti-hooks.  That is, in fact, true and follows from our later
results:

\begin{cor}\label{c:diag-var}
For every Young diagram $\la$, we have:
$$
\sum_{u\in \lambda} \. h(u)^2 \, \ge \, \sum_{u\in \lambda} \. \bigl(h^\ast(u)\bigr)^2.
$$
\end{cor}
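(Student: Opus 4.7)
The plan is to deduce Corollary~\ref{c:diag-var} from the stronger statement that the multiset $H_\la := \{h(u) : u \in \la\}$ majorizes the multiset $H_\la^\ast := \{h^\ast(u) : u \in \la\}$. By Observation~\ref{o:hook-diag} we already have $\sum h(u) = \sum h^\ast(u)$, so majorization reduces to the partial-sum inequalities: after sorting both multisets in weakly decreasing order as $h_{(1)} \ge h_{(2)} \ge \ldots$ and $h^\ast_{(1)} \ge h^\ast_{(2)} \ge \ldots$, we need $\sum_{i \le k} h_{(i)} \ge \sum_{i \le k} h^\ast_{(i)}$ for every $k$. Once this is in hand the Corollary is immediate, since $x \mapsto x^2$ is convex and $\sum_i \phi(x_i)$ is Schur-convex whenever $\phi$ is convex.

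More uniformly, majorization is equivalent to the statement that for every convex $\phi \colon \R_{>0} \to \R$,
$$
\sum_{u \in \la} \phi\bigl(h(u)\bigr) \, \ge \, \sum_{u \in \la} \phi\bigl(h^\ast(u)\bigr).
$$
The choice $\phi(x) = -\log x$ recovers Theorem~\ref{t:diag-hook}, while $\phi(x) = x^2$ yields Corollary~\ref{c:diag-var}. So a single majorization argument handles both inequalities simultaneously and, as a bonus, makes precise the informal statement following Theorem~\ref{t:diag-hook} that the hook and anti-hook distributions share a mean but the hooks have larger variance.

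To prove the majorization itself, I would employ the shuffling machinery promised in Section~\ref{s:shuffle}: build a sequence of elementary \emph{spreading moves} that converts $H_\la^\ast$ into $H_\la$ while preserving the total sum and only producing configurations that majorize the previous one. The combinatorial input is that the hook-lengths $h(i,j) = \la_i + \la'_j - i - j + 1$ dominate the anti-hook lengths $h^\ast(i,j) = i + j - 1$ in the upper-left part of~$\la$ — where row and column lengths are largest — and are dominated by them near the border, with the two effects cancelling on average by Observation~\ref{o:hook-diag}. The main obstacle is organizing these cancellations globally: the naive pairwise comparison $|h(u) - h(v)| \ge |h^\ast(u) - h^\ast(v)|$ (which would give the result by a rearrangement argument) fails for general boxes $u,v$, so the shuffling must be performed in larger blocks and verified inductively, for example by peeling off a rim hook or a corner of~$\la$ and invoking the inductive hypothesis on the smaller shape. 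The equality case in Theorem~\ref{t:diag-hook}, and the corresponding equality in the Corollary, occurs precisely when $\la$ is rectangular, where a direct computation shows $H_\la = H_\la^\ast$ as multisets (both equal $\{a+b-i-j+1 : (i,j)\in\la\}$ up to the substitution $(i,j)\leftrightarrow(a+1-i,b+1-j)$ for an $a\times b$ rectangle).
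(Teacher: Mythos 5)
Your reduction is exactly the paper's: Corollary~\ref{c:diag-var} is deduced from the multiset majorization $\ch_\la \trianglerighteq \ch^\ast_\la$ (Theorem~\ref{t:diag-major}) together with Karamata's inequality (Theorem~\ref{t:karam}) applied to the convex function $\vp(z)=z^2$, with Observation~\ref{o:hook-diag} supplying the equal-sums half of the majorization. Your variance interpretation and your verification of the rectangular equality case are also correct.

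The gap is that the majorization itself --- which is the entire content of the argument --- is only gestured at, and the strategies you gesture at are not the ones that work. You propose either a chain of ``spreading moves'' carrying $\ch^\ast_\la$ to $\ch_\la$ through intermediate configurations each majorizing the last, or an induction that peels off a rim hook or a corner of $\la$; neither is executed, and the peeling route is genuinely delicate, since deleting a single corner cell changes every hook length in its row and its column, so it is not clear the inductive step preserves majorization. The paper's proof of Theorem~\ref{t:diag-major} is non-inductive and goes through the sufficient condition of Lemma~\ref{l:major-subsets}: it suffices to show that for every $k$-element set of cells $X \subseteq \la$ there is a $k$-element set $Y \subseteq \la$ with $\sum_{y \in Y} h(y) \ge \sum_{x \in X} h^\ast(x)$. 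Such a $Y$ is produced by the two-step shuffle of Section~\ref{s:shuffle}: first push the cells of $X$ to the tops of their columns (within each column the pushed cells then occupy the positions with the largest legs, so their leg sum dominates the anti-leg sum of $X$, while each arm weakly increases and each anti-arm is unchanged), then push the result to the left ends of the rows (the arm sum then dominates the anti-arm sum, while each leg weakly increases). Adding the arm and leg estimates cell by cell gives $h(Y) \ge h^\ast(X)$, hence the majorization. Without this step, or an equally concrete substitute, your proposal does not yet prove the corollary.
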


Here is yet another corollary from our general results.  For $(i,j)\in \la$, let
$$
a(i,j)\. := \. \bigl|\bigl\{(p,q)\in \la~:~i\le p, \. j \le q\bigr\}\bigr| \,
\quad \text{and} \quad a^\ast(i,j)\. := \. i\cdot j\ts.
$$
Here $a(i,j)$ is the area of~$\la$ below and to the right of~$(i,j)$.  Similarly,
$a^\ast(i,j)$ is the area of~$\la$ above and to the left of~$(i,j)$.  We refer to
these as \emph{area} and \emph{anti-area numbers}. We have the following equality
analogous to Observation~\ref{o:hook-diag} and the following analogue of
Theorem~\ref{t:diag-hook}.

\begin{cor}\label{c:diag-area}
For every Young diagram $\la$, we have:
$$
\prod_{u\in \lambda} \. a(u) \, \le \, \prod_{u\in \lambda} \. a^\ast(u).
$$
\end{cor}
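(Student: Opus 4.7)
The plan is to mirror the strategy used for Theorem~\ref{t:diag-hook}, replacing hooks and anti-hooks by areas and anti-areas throughout.

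First, I would establish the analog of Observation~\ref{o:hook-diag}:
$$
\sum_{u\in \lambda} \. a(u) \, = \, \sum_{u\in \lambda} \. a^\ast(u).
$$
Indeed, note that $a^\ast(i,j)=ij$ equals the number of cells $(p,q)$ with $p\le i$ and $q\le j$, and every such cell lies in $\la$ since $\la$ is a Young diagram. Hence both sides count the pairs $(u,v)\in \la\times\la$ with $u$ weakly northwest of $v$: the left-hand side by indexing on the northwest endpoint $u$, the right-hand side by indexing on the southeast endpoint $v$. This is the exact counterpart of the hook/anti-hook double count, with ``directly right/below'' replaced by ``anywhere to the southeast''.

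Second, once the sum equality is in place, the content of the inequality is entirely a statement about how the multiset $\{a(u)\}_{u\in\la}$ compares to $\{a^\ast(u)\}_{u\in\la}$ at fixed total. I would invoke the majorization/shuffling framework introduced in sections~\ref{s:maj} and~\ref{s:shuffle} and reduce the corollary to the assertion that
$$
\{a(u)\}_{u\in\la} \;\text{ majorizes }\; \{a^\ast(u)\}_{u\in\la}.
$$
Once this is established, applying Karamata's inequality with the concave function $\log$ yields $\sum_u \log a(u) \le \sum_u \log a^\ast(u)$, and exponentiation gives the product inequality. The equality case corresponds to the two multisets coinciding, which is precisely the rectangular case (as one checks directly: for $\la=b^a$, both multisets equal $\{ij\}_{1\le i\le a,\,1\le j\le b}$).

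The main obstacle is the majorization claim itself. I envision two viable routes. The first, and preferable, route is to display $a$ and $a^\ast$ as a single instance of the ``generalized hook/anti-hook'' pair set up in the general framework later in the paper: here the hook at $u$ is the full southeast cone $S(u)=\{v\in\la: u\nwarrow v\}$ rather than just its two rays, and the anti-hook is the full northwest rectangle $W(u)$. Because the paired counting identity has the same shape as in the classical hook case, the corollary should fall out as a direct specialization. The second route is a stand-alone shuffling argument: starting from a rectangular $\la$ (where areas and anti-areas coincide) and building up an arbitrary Young diagram by a sequence of corner-adding moves, one shows that each move preserves the anti-area multiset only up to relabeling while pushing the area multiset strictly upward in the majorization order, by tracking how adding a cell simultaneously decrements several $a$-values and inserts one large new $a$-value. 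Either route isolates the difficulty in the same place: verifying that the local change at each elementary move respects the partial-sum comparison that defines majorization.
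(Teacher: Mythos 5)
Your proposal is correct and follows essentially the same route as the paper: the corollary is obtained by combining the majorization statement $\ca_\la \trianglerighteq \ca^\ast_\la$ (Theorem~\ref{t:area-major}) with Karamata's inequality (Theorem~\ref{t:karam}) applied to the strictly convex function $\vp(z)=-\log z$, exactly as you describe. Your first route for the majorization claim is also the paper's: the area/anti-area pair is the specialization $g\equiv 1$ of the weighted framework of Theorem~\ref{t:diag-function}, proved by the two-step shuffling $X\to X'\to Y$ with the indicator decomposition $a=\sum_{p,q}a_{pq}$, so the only part of your proposal that would still need to be executed (your second, corner-adding route) is not needed.
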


The reader will have to wait until Section~\ref{s:maj}
to see how these results fit together.

\bigskip

\section{Second story: The number of increasing trees and linear extensions}\label{s:tree}

\subsection{Increasing trees}
Let $\tau$ be a \emph{rooted tree} on $n$ vertices, one of which is called
the \emph{root} $R\in V$.  We draw the trees as in Figure~\ref{f:tree}.
An \emph{increasing tree of shape~$\tau$} is a labeling of~$\tau$
with the numbers $1,\ldots,n$, each appearing once, increasing downwards,
away from~$R$.  Denote by $\IT(\tau)$ the set of increasing trees.

The number \ts $\bigl|\IT(\tau)\bigr|$ \ts famously has its own
analogue of the ``hook-length formula'' (see~$\S$\ref{ss:finrem-hist}).
For a vertex $v\in \tau$, denote by $b(v)$ the size of the \emph{branch} in~$\tau$
below~$v$.  We have:
\begin{equation}
\label{eq:HLF-tree} 
\bigl|\IT(\tau)\bigr| \, = \, n! \, \prod_{v\in \tau} \. \frac{1}{b(v)}\..
\end{equation}
For example, for $\tau$ as in the figure, we have:
$$
\bigl|\IT(\tau)\bigr| \, = \, \frac{8!}{8\cdot 4 \cdot 3 \cdot 2} \, = \. 210.
$$

\vskip-0.3cm
\begin{figure}[hbt]
\begin{center}
\includegraphics[scale=0.70]{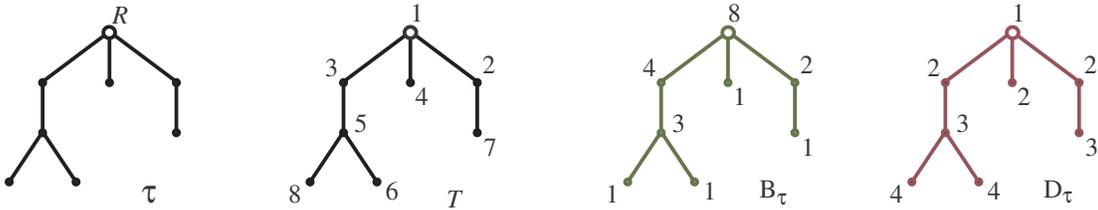}
\caption{Rooted tree $\tau$ with $n=8$ vertices, increasing tree $T$ of shape~$\tau$,
table of branch sizes \ts B$_\tau=\bigl\{b(v)\bigr\}$
and table of distances \ts D$_\tau =\bigl\{d(v)\bigr\}$. }
\label{f:tree}
\end{center}
\end{figure}

We can now state the inequality analogous to Theorem~\ref{t:diag-hook}.
For a vertex $v\in \tau$, denote by $d(v)$ the \emph{distance} from $v$
to the root~$R$, defined as the number of vertices on the shortest path
from $v$ to~$R$ (see Figure~\ref{f:tree}).

\begin{thm} \label{t:tree-hook}
For every rooted tree $\tau$, we have:
$$
\prod_{v\in \tau} \. b(v) \, \le \, \prod_{v\in \tau} \. d(v).
$$
Moreover, the equality holds if and only if $\tau$ is a path with
an endpoint at~$R$.
\end{thm}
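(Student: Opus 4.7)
The plan is to induct on $n := |\tau|$; the base case $n=1$ is trivial. For the inductive step, I pick any leaf $\ell$ of $\tau$, let $p$ be its parent, and set $\tau' := \tau \setminus \{\ell\}$. Removing a leaf does not affect distances to the root, so $d_\tau(v) = d_{\tau'}(v)$ for all $v \in \tau'$; and it changes the branch size only at ancestors of $\ell$, where $b_\tau(v) = b_{\tau'}(v) + 1$. Furthermore $b_\tau(\ell) = 1$ and $d_\tau(\ell) = d_{\tau'}(p)+1$. Write $R = u_0, u_1, \ldots, u_m = p$ for the ancestors of $\ell$ in $\tau'$, so that $m+2 = d_\tau(\ell)$.

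Comparing the two products on $\tau$ and $\tau'$ directly,
$$
\frac{\prod_{v\in \tau}\.b_\tau(v)}{\prod_{v\in \tau'}\.b_{\tau'}(v)} \, = \, \prod_{i=0}^m \frac{b_{\tau'}(u_i)+1}{b_{\tau'}(u_i)}, \qquad \frac{\prod_{v\in \tau}\.d_\tau(v)}{\prod_{v\in \tau'}\.d_{\tau'}(v)} \, = \, m+2.
$$
The inductive hypothesis supplies $\prod_{v\in \tau'}\.b_{\tau'}(v) \le \prod_{v\in \tau'}\.d_{\tau'}(v)$, so the inductive step reduces to the \emph{side inequality}
$$
\prod_{i=0}^m \frac{b_{\tau'}(u_i)+1}{b_{\tau'}(u_i)} \, \le \, m+2.
$$

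This is the heart of the argument. Since $u_{i+1}$ is a proper descendant of $u_i$ in $\tau'$, the positive integers $b_{\tau'}(u_0) > b_{\tau'}(u_1) > \cdots > b_{\tau'}(u_m)$ strictly decrease, which forces $b_{\tau'}(u_i) \ge m+1-i$. As $x \mapsto 1 + 1/x$ is decreasing, each factor is at most $(m+2-i)/(m+1-i)$, and the resulting product telescopes to exactly $m+2$.

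For the equality case, I trace back: the telescoping bound is tight only when $b_{\tau'}(u_i) = m+1-i$ for all $i$, which forces $\tau'$ to consist of precisely the chain $u_0, u_1, \ldots, u_m$; combined with the inductive equality (which itself requires $\tau'$ to be a root-path), this makes $\tau$ a root-path ending at $\ell$. The main subtlety is isolating the correct side inequality; once it is identified, the bound follows cleanly from the strict nesting of branch sizes along any root-descending chain together with the monotonicity of $1 + 1/x$.
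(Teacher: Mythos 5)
Your induction is correct and complete. The reduction to the side inequality $\prod_{i=0}^m \bigl(b_{\tau'}(u_i)+1\bigr)/b_{\tau'}(u_i) \le m+2$ is right, the key point is exactly the strict nesting $b_{\tau'}(u_0) > \cdots > b_{\tau'}(u_m) \ge 1$ along the root-to-$p$ chain, which gives $b_{\tau'}(u_i) \ge m+1-i$, and the telescoping closes the estimate; the equality analysis also works (you may want to add the one-line converse that a root-path has $\{b(v)\} = \{d(v)\} = \{1,\ldots,n\}$, and note that tightness of the side inequality already forces $\tau'$ to be the chain, making the appeal to the inductive equality case redundant). This is precisely the ``straightforward proof by induction'' that the paper states exists and leaves to the reader, so you have supplied the intended elementary argument; but it is not the route the paper actually writes down. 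The paper deduces the inequality from the Hammett--Pittel bound of Theorem~\ref{t:poset-ineq}, $\bigl|\LE(\cP)\bigr| \ge n!\prod_x 1/r_{\prec}(x)$, applied to the tree order and its reverse (which yields the inequality instantly but says nothing about the equality case), and independently from the majorization $\cR_\tau \trianglerighteq \cd_\tau$ of Theorem~\ref{t:tree-major}, proved by a shuffling argument and combined with Karamata's inequality for $\varphi(z)=-\log z$. Your route is self-contained and handles the equality characterization directly; the paper's majorization route is heavier but buys more, namely every convex-function corollary at once, such as $\sum_v b(v)^2 \ge \sum_v d(v)^2$.
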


For example, for $\tau$ as in the figure, we have:
$$
8\cdot 4  \cdot 3 \cdot 2 \. = \. 192 \, \. \le \, \. 4 \cdot 4 \cdot 3 \cdot 3 \cdot 2 \cdot 2 \cdot 2 \. = \. 1152.
$$

The theorem is easier than Theorem~\ref{t:diag-hook} and has a straightforward
proof by induction which we leave to the reader.  However, the following
direct analogue of Observation~\ref{o:hook-diag} is a warning
that it is still nontrivial:

\begin{obs}\label{o:hook-tree}
For every rooted tree $\tau$, we have:
$$
\sum_{v\in \tau} \. b(v) \, = \, \sum_{v\in \tau} \. d(v).
$$
\end{obs}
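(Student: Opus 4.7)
The plan is to mimic the double-counting proof of Observation~\ref{o:hook-diag} from the diagram setting. I would introduce the ancestor--descendant relation on $\tau$: say that $u\preceq v$ if $u$ lies on the (unique) shortest path from~$v$ to the root~$R$. This is a partial order with~$R$ as the minimum, and by convention $v\preceq v$. Consider the set of comparable pairs
$$
\Pi \. := \. \bigl\{(u,v)\in \tau\times\tau \,:\, u\preceq v\bigr\}.
$$
The whole proof will consist of computing $|\Pi|$ in two different ways.

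First I would fix the descendant~$v$ and count the $u$'s with $(u,v)\in \Pi$. By definition of~$\preceq$, such $u$'s are exactly the vertices on the shortest path from~$v$ to~$R$, and their number is $d(v)$. Summing over $v$ gives $|\Pi|=\sum_{v\in\tau} d(v)$. Next I would fix the ancestor~$u$ and count the $v$'s with $(u,v)\in \Pi$. These are precisely the vertices in the branch rooted at~$u$ (with $u$ itself included, since $u\preceq u$), and their number is $b(u)$ by definition. Summing over $u$ gives $|\Pi|=\sum_{u\in\tau} b(u)$.

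Comparing the two evaluations of $|\Pi|$ yields the claimed identity \ts $\sum_v b(v)=\sum_v d(v)$. There is no real obstacle: the only thing to be careful about is the convention for endpoint inclusion, but the definitions in the paper ($v$ lies on its own root-path, and $v$ lies in its own branch) make both counts consistent. This also matches the intuition from Observation~\ref{o:hook-diag}, where the ``hook at $u$'' and ``anti-hook at $u$'' play the roles of branch and root-path respectively, so that in both stories the two sums compute the cardinality of a single comparability relation.
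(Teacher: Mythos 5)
Your proof is correct and is exactly the ``easy double counting proof'' that the paper omits: both sides count the comparable pairs $(u,v)$ in the ancestor--descendant order, mirroring the paper's proof of Observation~\ref{o:hook-diag}. The endpoint conventions you check ($d(R)=1$ and $v$ belonging to its own branch) do match the paper's definitions, so the two counts agree as claimed.
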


We omit the easy double counting proof of the observation.
Before we derive Theorem~\ref{t:tree-hook}, let us first state the analogue
of Corollary~\ref{c:diag-var}, which is of independent interest.

\begin{cor}\label{cor:tree-var}
For every rooted tree $\tau$, we have:
$$
\sum_{v\in \tau} \. b(v)^2 \, \ge \, \sum_{v\in \tau} \. d(v)^2.
$$
\end{cor}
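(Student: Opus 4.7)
The plan is to prove the inequality by a direct double-counting argument, expressing both sides as counts of triples of vertices of~$\tau$.  Writing $u \preceq v$ for ``$u$ is an ancestor of $v$, possibly $u=v$'', we have
$$
\sum_v b(v)^2 \. = \. \bigl|\bigl\{(v,u_1,u_2) \. : \. v \preceq u_1,\ v \preceq u_2\bigr\}\bigr| \. = \. \sum_{(u_1,u_2)\in V(\tau)^2} d\bigl(\mathrm{lca}(u_1,u_2)\bigr),
$$
since for any $u_1,u_2$ the common ancestors form a chain of size $d(\mathrm{lca}(u_1,u_2))$.  Analogously,
$$
\sum_v d(v)^2 \. = \. \bigl|\bigl\{(v,w_1,w_2) \. : \. w_1 \preceq v,\ w_2 \preceq v\bigr\}\bigr| \. = \. \sum_{(w_1,w_2)\ \text{comparable}} b\bigl(\mathrm{lower}(w_1,w_2)\bigr),
$$
where the comparability of $w_1,w_2$ is forced, and $\mathrm{lower}$ denotes the one farther from the root.

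Splitting the first identity by whether $(u_1,u_2)$ are comparable, and using $\mathrm{lca}=\mathrm{upper}$ on comparable pairs, we obtain
$$
\sum_v b(v)^2 \. - \. \sum_v d(v)^2 \. = \. \sum_{(a,b)\ \text{comp}}\bigl[d(\mathrm{upper})-b(\mathrm{lower})\bigr] \. + \. \sum_{(a,b)\ \text{incomp}} d\bigl(\mathrm{lca}(a,b)\bigr).
$$
The second sum is manifestly nonnegative, so the task reduces to showing that the first (comparable-pairs) sum vanishes.

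The diagonal $a=b$ contribution to that sum is $\sum_v [d(v)-b(v)]=0$ by Observation~\ref{o:hook-tree}.  The symmetry $(a,b)\leftrightarrow(b,a)$ makes the off-diagonal part equal $2\sum_{a \prec b}[d(a)-b(b)]$, summed over ordered strict-ancestor pairs.  Counting ancestor-by-ancestor yields $\sum_{a \prec b} d(a) = \sum_v \binom{d(v)}{2}$ and $\sum_{a \prec b} b(b) = \sum_v b(v)(d(v)-1)$, so the main obstacle becomes the auxiliary identity
$$
\sum_v \binom{d(v)}{2} \. = \. \sum_v b(v)\bigl(d(v)-1\bigr).
$$
This in turn follows from Observation~\ref{o:hook-tree} combined with the double-counting identity $\sum_v b(v) d(v) = \sum_u \binom{d(u)+1}{2}$, itself obtained by swapping the order of summation: $\sum_v b(v) d(v) = \sum_u \sum_{v \preceq u} d(v) = \sum_u\bigl(1+2+\cdots+d(u)\bigr)$.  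Equality in the corollary then holds iff there are no incomparable pairs, i.e., iff $\tau$ is a path rooted at an endpoint, matching Theorem~\ref{t:tree-hook}.
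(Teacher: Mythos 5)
Your proof is correct, but it takes a genuinely different route from the paper's. The paper obtains Corollary~\ref{cor:tree-var} as an immediate consequence of Theorem~\ref{t:tree-major} (the majorization \ts $\cR_\tau \trianglerighteq \cd_\tau$, established by the shuffling argument) together with Karamata's inequality (Theorem~\ref{t:karam}) applied to the convex function $\vp(z)=z^2$. You instead give a direct, self-contained double count: both sides become counts of triples of vertices, and the difference collapses to $\sum_{(a,b)\,\text{incomp.}} d(\mathrm{lca}(a,b)) \ge 0$ once the comparable-pairs contribution is shown to cancel using Observation~\ref{o:hook-tree} together with the identity $\sum_v b(v)\ts d(v) = \sum_v \binom{d(v)+1}{2}$. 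I checked each step: with the paper's convention $d(R)=1$, the common ancestors of $u_1,u_2$ indeed number $d(\mathrm{lca}(u_1,u_2))$, and the auxiliary identity $\sum_v \binom{d(v)}{2} = \sum_v b(v)(d(v)-1)$ follows exactly as you say. What the paper's route buys is uniformity: the single majorization statement yields $\sum_v \vp(b(v)) \ge \sum_v \vp(d(v))$ for every convex $\vp$, hence also the product inequality of Theorem~\ref{t:tree-hook} via $\vp=-\log$. What your route buys is an exact combinatorial formula for the gap --- the difference of the two sums equals a weighted count of incomparable ordered pairs --- which makes both the inequality and its equality case (equality if and only if $\tau$ is a path rooted at an endpoint) transparent without any majorization machinery; its drawback is that it is tailored to $\vp(z)=z^2$ and does not extend to the other consequences of Theorem~\ref{t:tree-major}.
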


Again, we derive the corollary from our general results.

\medskip

\subsection{Linear extensions}
Let $\cP$ be a ranked poset on a finite set $X$, $|X|=n$,
with linear ordering denoted by~$\prec$.  A \emph{linear extension}
of $\cP$ is a bijection $f: X\to \{1,\ldots,n\}$ such that
$f(x) < f(y)$ for all $x\prec y$.
Let $\LE(\cP)$ be the set of \emph{linear extensions} of~$\cP$.

Note that when $\cP$ is a Young diagram~$\la$ or a rooted tree~$\tau$ with
natural linear ordering, the number \ts $\bigl|\LE(\cP)\bigr|$ \ts
of linear extensions coincides with \ts $\bigl|\SYT(\la)\bigr|$ \ts and
\ts $\bigl|\IT(\tau)\bigr|$, respectively.  However, for general posets
the number \ts $\bigl|\LE(\cP)\bigr|$ \ts is hard to find both
theoretically and computationally. In fact, \ts $\bigl|\LE(\cP)\bigr|$ \ts is
\ts $\SP$-complete to compute~\cite{BW} (see also~\cite{DP}),
so no simple product formula is expected to exist in full generality
(cf.~\cite{Pro}).

Denote by $\br_\prec(x)$ the size the of the upper order ideal \ts
$\{y \in X~:~y\succeq x\}\subseteq\cP$, i.e.\ the number of elements
in~$\cP$ greater or equal to~$x$.  We have the following general inequality:

\begin{thm} [{\cite[Cor.~2]{HP}}]  \label{t:poset-ineq}
For every poset $\cP$, in the notation above, we have:
$$
\bigl|\LE(\cP)\bigr| \, \ge \, n!\. \prod_{x\in \cP} \. \frac{1}{\br_\prec(x)}\,.
$$
\end{thm}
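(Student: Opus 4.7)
The plan is to argue by induction on $n = |X|$. The base case $n \le 1$ is immediate, so the content lies in the inductive step. The strategic choice I would make is to decompose $\LE(\cP)$ according to the identity of the element assigned the label~$1$, which must be a minimal element of $\cP$; the remaining labels form a linear extension of the deletion. This gives the exact identity
$$
\bigl|\LE(\cP)\bigr| \, = \, \sum_{m \text{ minimal}} \bigl|\LE\bigl(\cP \setminus \{m\}\bigr)\bigr|.
$$

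The key structural observation making this decomposition productive is that $\br_\prec$ is \emph{invariant} under deletion of a minimal element: for any minimal $m$ and any $x \in \cP$ with $x \neq m$, the upper ideal $\{y \succeq x\}$ cannot contain $m$ (otherwise $x \preceq m$ with $x \neq m$ would contradict minimality of $m$), so $\br_\prec^{\cP \setminus \{m\}}(x) = \br_\prec^\cP(x)$. This is precisely what motivates partitioning by the label-$1$ element rather than the label-$n$ element; deleting a maximal element would shift many $\br_\prec$ values at once and wreck the bookkeeping. With this invariance in hand, applying the inductive hypothesis to each $\cP \setminus \{m\}$ and simplifying yields
$$
\bigl|\LE(\cP)\bigr| \, \ge \, (n-1)! \, \prod_{x \in \cP} \frac{1}{\br_\prec(x)} \, \cdot \sum_{m \text{ minimal}} \br_\prec(m).
$$

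To close the induction, it suffices to establish
$$
\sum_{m \text{ minimal in } \cP} \br_\prec(m) \, \ge \, n,
$$
which is the step I expect to require the only genuine thought. I would handle it by a double-count: rewrite the left-hand side as $\bigl|\{(m, y) \in X \times X \,:\, m \text{ minimal in } \cP,\ y \succeq m\}\bigr|$ and fiber over $y$. Each $y \in X$ dominates at least one minimal element of $\cP$ — any minimal element of the lower ideal $\{z \preceq y\}$ is automatically minimal in all of $\cP$ — so every $y$ contributes at least once, giving the bound $n$. Multiplying the previous display by $n$ promotes $(n-1)!$ to $n!$ and yields the claim.

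The whole argument is elementary once the right viewpoint is fixed. The main obstacle is not any hard computation but the structural choice at the outset: partitioning by the minimal-element label is what makes the $\br_\prec$ values behave transparently under the induction, after which the proof collapses to the one-line double-count above.
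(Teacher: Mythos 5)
Your proof is correct, but note that the paper itself contains no proof of this statement: Theorem~\ref{t:poset-ineq} is quoted from Hammett--Pittel~\cite{HP}, whose argument the authors describe as ``an involved probabilistic argument.'' Your route is therefore genuinely different and considerably more elementary, and every step checks out. The element labelled $1$ in a linear extension must be minimal, giving the exact recursion $|\LE(\cP)|=\sum_{m}|\LE(\cP\setminus\{m\})|$ over minimal elements $m$; since a minimal $m$ belongs to no upper ideal $\{y\succeq x\}$ with $x\neq m$, the quantities $\br_\prec(x)$ are unchanged by the deletion, so the inductive hypothesis yields $|\LE(\cP)|\ge (n-1)!\,\bigl(\prod_{x}\br_\prec(x)^{-1}\bigr)\sum_{m}\br_\prec(m)$; and your double count of pairs $(m,y)$ with $m$ minimal and $m\preceq y$ --- every $y$ dominates at least one element that is minimal in its own lower ideal, hence minimal in $\cP$ --- gives $\sum_m \br_\prec(m)\ge n$, which closes the induction. (Rankedness of $\cP$, assumed in the paper's setup, is never used, which is fine since the statement does not need it.) What the probabilistic argument of~\cite{HP} buys instead is the correlation-inequality viewpoint alluded to after the theorem --- positive correlation of the events $A_x$ --- together with the quantitative information relevant to their main question; your induction buys a short, self-contained proof of exactly the inequality as stated, which is all the present paper uses.
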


\nin
This lower bound was proposed by Stanley~\cite[Exc.~3.57]{EC}
and proved by Hammett and Pittel in~\cite{HP} by an involved
probabilistic argument.  Heuristically, the theorem says that events
\ts $A_x=$ ``random bijection $f:X\to \{1,\ldots,n\}$ has $f(x)<f(y)$
for all $y\succ x$'' have positive correlation.

When $\cP$ is a rooted tree~$\tau$ on $n$ vertices, with the order~``$\prec$''
increasing down, the inequality in the theorem is an equality, and the sizes of
upper order ideal are exactly the branch size numbers: \ts $\br_\prec(v)=b(v)$.
When the order is reversed, we obtain the distance numbers: \ts
$\br_\succ(v)=d(v)$. We thus have:
$$
n!\. \prod_{v\in \tau} \. \frac{1}{b(v)} \, = \, \bigl|\IT(\tau)\bigr|
\, \ge \, n!\. \prod_{v\in \tau} \. \frac{1}{d(v)}\,,
$$
which implies the inequality in Theorem~\ref{t:tree-hook}.

\bigskip

\section{Majorization approach}\label{s:maj}

\subsection{Definitions and background}
Let \ts $\bba=(a_1,\ldots,a_n), \bbb=(b_1,\ldots,b_n)\in \rr^n$ \ts
be two non-increasing sequences \ts $a_1\ge \ldots \ge a_n >0$ \ts and \ts
$b_1\ge \ldots \ge b_n >0$. We say that $\bba$ \emph{majorizes}~$\bbb$,
write \ts $\bba \trianglerighteq \bbb$, if
\begin{equation}
\label{eq:maj}
\aligned
a_1 \ts + \ts \ldots \ts + \ts a_k \, &\ge \, b_1 \ts + \ts \ldots \ts + \ts b_k \quad
\text{for all} \ \. 1 \le k < n\ts, \ \ \text{and} \\
a_1 \ts + \ts \ldots \ts + \ts a_n \, &= \, b_1 \ts + \ts \ldots \ts + \ts b_n\ts.
\endaligned
\end{equation}
See~\cite{MOA} for a thorough treatment of majorization. 
We need the following important result:

\begin{thm}[Karamata's inequality] \label{t:karam}
Fix a strictly convex function \ts $\vp:\rr\to \rr$.  Let \ts $\bba, \bbb\in \rr^n$ \ts
be two sequences, s.t.\ $\bba \trianglerighteq \bbb$.  Then
$$
\sum_{i=1}^n \. \vp(a_i) \, \ge \, \sum_{i=1}^n \. \vp(b_i)\ts.
$$
Moreover, the equality holds if and only if \ts $\bba=\bbb$.
\end{thm}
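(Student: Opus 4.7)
The plan is to give the classical Abel-summation proof of Karamata's inequality. The key ingredient is the tangent-line characterization of convexity: at every point $b\in\rr$ there is a subgradient $c(b)$ of $\vp$, say the right derivative $\vp'_+(b)$, satisfying
$$
\vp(a)\,\ge\,\vp(b)\,+\,c(b)\,(a-b) \quad\text{for all } a\in\rr.
$$
Moreover, convexity forces $\vp'_+$ to be non-decreasing, so if I set $c_i := \vp'_+(b_i)$, the fact that $b_1 \ge \ldots \ge b_n$ gives that $(c_i)$ is non-increasing.

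Summing the tangent-line inequality over $i=1,\ldots,n$ reduces the problem to establishing
$$
\sum_{i=1}^n c_i\,(a_i-b_i)\,\ge\,0.
$$
For this I would apply summation by parts using the partial sums $A_k := a_1+\ldots+a_k$ and $B_k := b_1+\ldots+b_k$, which rewrites the left-hand side as
$$
\sum_{i=1}^{n-1}(c_i-c_{i+1})(A_i-B_i)\,+\,c_n\,(A_n-B_n).
$$
The majorization hypothesis~\eqref{eq:maj} gives $A_i-B_i\ge 0$ for $i<n$ and $A_n-B_n=0$. Combined with $c_i-c_{i+1}\ge 0$, every summand is non-negative, so the whole sum is non-negative, establishing the main inequality.

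For the equality case, suppose $\bba\ne\bbb$ and pick any index $i$ with $a_i\ne b_i$. Strict convexity makes the corresponding tangent-line inequality strict, namely $\vp(a_i)>\vp(b_i)+c_i(a_i-b_i)$. Since the Abel sum is $\ge 0$ regardless, a single strict inequality in the tangent step forces the end-to-end inequality $\sum\vp(a_i)>\sum\vp(b_i)$ to be strict. Conversely, if $\bba=\bbb$ then trivially both sides agree.

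The only mildly technical point is that a strictly convex $\vp$ need not be everywhere differentiable, so I should phrase the argument with one-sided derivatives (which always exist and are monotone for convex functions) rather than with $\vp'$. This does not affect either the monotonicity of $(c_i)$ or the strict-inequality clause of the tangent condition, so it causes no real difficulty; I expect this to be the only step requiring any care.
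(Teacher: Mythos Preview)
Your proof is correct and is precisely the classical Abel-summation argument. Note, however, that the paper does not actually prove Theorem~\ref{t:karam}: it simply declares the result classical and defers to \cite[$\S$3.17]{HLP} and \cite[$\S$28,$\S$30]{BB} for proofs, so there is no ``paper's own proof'' to compare against. Your write-up is essentially the proof one finds in those references, so it is entirely appropriate here.
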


This result is classical and the converse also holds, but we will not need it.
For streamlined proofs and further references see e.g.~\cite[$\S$3.17]{HLP}
and~\cite[$\S$28,$\S$30]{BB}.

\smallskip

Since our hooks numbers do not have a natural ordering, we will need
a multiset analogue of majorization.
For two multisets $\ca$ and $\cb$ with $n$~real numbers, let~$\bba$ and~$\bbb$
are non-increasing ordering of~$\ca$ and~$\cb$, respectively. We say that
$\ca$ \emph{majorizes}~$\cb$, write \ts $\ca \trianglerighteq \cb$, whenever
\ts $\bba \trianglerighteq \bbb$.  The following is a sufficient condition for
majorization.

\begin{lemma} \label{l:major-subsets}
Let $\ca,\cb$ two multisets with $n$~real numbers.  Suppose for all \ts
$\cb' \ssuq \cb$, there exist $\ca'\ssuq \ca$, such that \ts
$|\ca'|=|\cb'|$ \ts and
$$
\sum_{x\in \ca'} \. x \, \ge \, \sum_{y\in \cb'} \. y \,\ts.
$$
Then \ts $\ca \trianglerighteq \cb$.
\end{lemma}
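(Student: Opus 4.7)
The plan is to verify both clauses of the definition \eqref{eq:maj} of $\ca\trianglerighteq\cb$ directly from the hypothesis. Write $\bba=(a_1\ge\ldots\ge a_n)$ and $\bbb=(b_1\ge\ldots\ge b_n)$ for the non-increasing orderings of $\ca$ and $\cb$. The only structural fact I need is the trivial extremal observation that, among all $k$-element sub-multisets of $\ca$, the top $k$ elements maximize the sum; consequently, for every $\ca'\ssuq \ca$ with $|\ca'|=k$ one has $\sum_{x\in \ca'} x \.\le\. a_1+\ldots+a_k$.

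The key step is to apply the hypothesis to a single well-chosen family of test subsets: for each $k$ with $1\le k\le n$, take $\cb':=\{b_1,\ldots,b_k\}$, the sub-multiset of the $k$ largest elements of $\cb$. The hypothesis produces some $\ca'\ssuq \ca$ with $|\ca'|=k$ and $\sum_{x\in \ca'}x \.\ge\. b_1+\ldots+b_k$. Combining with the extremal observation,
$$
a_1+\ldots+a_k \,\ge\, \sum_{x\in \ca'} x \,\ge\, b_1+\ldots+b_k,
$$
which is exactly the partial-sum inequality of \eqref{eq:maj} for that value of $k$.

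Taking $k=n$ additionally yields $\sum \ca \ge \sum \cb$; in every application of the lemma in this paper, the reverse inequality (equivalently, the equality of total sums) is already on record from a direct double count, e.g.\ Observations~\ref{o:hook-diag} and~\ref{o:hook-tree}, which closes the second clause of \eqref{eq:maj}. Thus there is no real obstacle: the only content of the lemma is the extremal fact that top-$k$ sums dominate arbitrary $k$-subset sums, and the proof reduces to plugging $\cb'=\{b_1,\ldots,b_k\}$ into the assumption for each $k$ in turn.
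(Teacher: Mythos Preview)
Your proof is correct and follows exactly the paper's route: take $\cb'=\{b_1,\ldots,b_k\}$, invoke the hypothesis to obtain some $\ca'$ with larger sum, and bound that above by the top-$k$ sum $a_1+\ldots+a_k$. Your added remark that the equality clause of~\eqref{eq:maj} is not delivered by the hypothesis and must be supplied externally (via Observations~\ref{o:hook-diag} and~\ref{o:hook-tree}) is a point the paper's terse proof leaves implicit.
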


The lemma is also standard and straightforward.  Since we use it repeatedly,
we include a simple proof for completeness.

\begin{proof}
Let~$\bba$ and~$\bbb$ be non-increasing ordering of~$\ca$ and~$\cb$, respectively.
Take \ts $\cb'=\{b_1,\ldots,b_k\}$ \ts and the corresponding \ts
$\ca'=\bigl\{a_{i_1},\ldots,a_{i_k}\bigr\}$, where \ts $1\le i_1< \cdots < i_k\le n$.  We have:
$$
b_1 \ts + \ts \ldots \ts + \ts  b_k \, \le \,  a_{i_1} \ts + \ts \ldots \ts + \ts a_{i_k}
\, \le \, a_1 \ts + \ts \ldots \ts + \ts a_k\ts,
$$
as desired.
\end{proof}

\medskip

\subsection{Applications to hook inequalities}
In notation of Section~\ref{s:diag}, we can now state the following new result.

\begin{thm}\label{t:diag-major}
Let $\la$ be a Young diagram. Denote by
$\ts\ch_\la = \bigl\{h(i,j), \ts (i,j)\in\la\bigr\}$ \ts
the multiset of hook numbers, and by \ts
$\ch^\ast_\la= \bigl\{h^\ast(i,j), \ts (i,j)\in\la\bigr\}$ \ts
be the multiset of anti-hook numbers.  Then \ts
$\ch_\la \trianglerighteq \ch^\ast_\la$.
\end{thm}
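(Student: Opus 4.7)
The plan is to apply Lemma~\ref{l:major-subsets}: for every subset $B \subseteq \la$ (indexing a sub-multiset of $\ch^\ast_\la$), I will exhibit $A \subseteq \la$ with $|A| = |B|$ and $\sum_{v \in A} h(v) \ge \sum_{u \in B} h^\ast(u)$. The starting point is the double-counting identity
$$
\sum_{u \in B} h^\ast(u) \;=\; \sum_{v \in \la} n_B(v), \qquad n_B(v) := \bigl|\{u \in B : u \text{ lies in the SE-hook of } v\}\bigr|,
$$
obtained by swapping the order of summation over pairs $(u,v)$ with $v$ in the NW-hook of $u$ (equivalently, $u$ in the SE-hook of $v$). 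Crucially $0 \le n_B(v) \le h(v)$, so the set $A_0 := \{v : n_B(v) \ge 1\}$ satisfies $\sum_{A_0} h(v) \ge \sum_B h^\ast(u)$; however, in general $|A_0| \ge |B|$ (indeed $B \subseteq A_0$), so the real task is to compress this support down to exactly $|B|$ cells while preserving the sum bound.

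First I would reduce to a canonical form of~$B$ by an exchange argument: since $h^\ast(i,j) = i+j-1$ is strictly increasing in the anti-diagonal $i+j$, swapping a cell of~$B$ with any cell of $\la \setminus B$ lying on a strictly larger anti-diagonal only increases $\sum_B h^\ast(u)$, and therefore makes our task harder. Hence it suffices to handle $B$ that is SE-closed along anti-diagonals, i.e., $\{(i,j) \in \la : i+j > d\} \subseteq B \subseteq \{(i,j) \in \la : i+j \ge d\}$ for some threshold~$d$. For such canonical $B$, the natural candidate is an NW-closed subset $A \subseteq \{(i,j) \in \la : i+j \le d'\}$ of size~$|B|$, whose cells carry the largest hook values by the monotonicity of $h$ along the SE direction of~$\la$.

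With these choices, the target inequality $\sum_{v \in A} h(v) \ge \sum_{u \in B} h^\ast(u)$ is equivalent, via the equal-sum Observation~\ref{o:hook-diag}, to the dual $\sum_{\la \setminus A} h(v) \le \sum_{\la \setminus B} h^\ast(u)$, a comparison between hook sums on an SE co-ideal and anti-hook sums on an NW Young subdiagram of matching size. The main obstacle is establishing this dual inequality cleanly and uniformly in~$\la$. The equality case is the rectangle $\la = a \times b$, where the involution $(i,j) \mapsto (a-i+1, b-j+1)$ sends hooks to anti-hooks so that the two multisets coincide; for a general $\la$, I expect to reduce to this case either by inductively peeling off rows or columns of $\la$ (showing that each removal weakly strengthens the majorization), or by a direct cell-by-cell double counting that exploits the anti-monotonicity of $h$ and $h^\ast$ along the SE direction, anti-diagonal by anti-diagonal.
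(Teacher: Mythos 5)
Your setup is sound as far as it goes: reducing to Lemma~\ref{l:major-subsets}, and further to the extremal $B$ of each size, is the right start, and the localized double count $\sum_{u\in B} h^\ast(u)=\sum_{v\in\la} n_B(v)$ with $n_B(v)\le h(v)$ is a correct refinement of Observation~\ref{o:hook-diag}. But there is a genuine gap, and you flag it yourself: the inequality $\sum_{v\in A}h(v)\ge\sum_{u\in B}h^\ast(u)$ for your canonical $A$ and $B$ --- which, after your reductions, is essentially the majorization statement restated --- is never established. The two strategies you sketch for it are both problematic. Peeling off a row (or column) of $\la$ changes the two multisets in a coupled way: the hook of every remaining cell in an affected column drops by one while the anti-hooks of the remaining cells are unchanged, so majorization for the smaller diagram does not transparently transfer to the larger one; this is not a routine induction. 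And the ``cell-by-cell double counting along anti-diagonals'' is exactly the support-compression problem you correctly identified after introducing $A_0$: you have not shown how to shrink $A_0$ to $|B|$ cells without losing the sum bound.

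The paper's proof sidesteps the need to identify the $k$ largest hooks at all. Lemma~\ref{l:major-subsets} only asks for \emph{some} $A$ of size $|B|$ with $\sum_A h\ge\sum_B h^\ast$, and the paper constructs one explicitly from $B$ by a two-step rearrangement (``shuffling''): push the cells of $B$ to the tops of their columns, then push the resulting cells to the left ends of their rows. The decisive trick is to split $h=\al+\be+1$ and $h^\ast=\al^\ast+\be^\ast+1$ into arms and legs and to check that the first step turns the anti-leg sum into a (larger) leg sum while not decreasing any arm, and the second step turns the anti-arm sum into a (larger) arm sum while not decreasing any leg; each step is a one-dimensional rearrangement inequality inside a single column or row, and adding the two chains of inequalities gives $\sum_Y h\ge\sum_B h^\ast$ for the shuffled set $Y$. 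This constructive replacement of $B$ by a shuffled $A$, together with the arm/leg decomposition, is the missing idea: neither your $A_0$ nor your extremal NW-closed $A$ comes equipped with a mechanism for actually comparing the two sums.
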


Here the equality part of~$(\divideontimes)$ is exactly Observation~\ref{o:hook-diag}.
The inequalities in~$(\divideontimes)$  will be deduced in the next section
from Lemma~\ref{l:major-subsets}.

Now, since $\vp(z)=z^2$ is strictly convex, Theorem~\ref{t:diag-major} and
Theorem~\ref{t:karam} imply Corollary~\ref{c:diag-var}.
Similarly, since $\vp(z)= -\log (z)$ is strictly convex on $\rr_{>0}$,
we obtain Theorem~\ref{t:diag-hook} from Theorem~\ref{t:diag-major} and
Theorem~\ref{t:karam} by taking logs on both sides.  The details are
straightforward.

\begin{thm}\label{t:area-major}
Let $\la$ be a Young diagram. Denote by \ts
$\ca_\la =\bigl\{a(i,j), \ts (i,j)\in\la\bigr\}$ \ts the multiset of area
numbers, and by \ts
$\ca^\ast_\la = \bigl\{a^\ast(i,j), \ts (i,j)\in\la\bigr\}$ \ts
the multiset of anti-area numbers.
Then \ts $\ca_\la \trianglerighteq \ca^\ast_\la$.
\end{thm}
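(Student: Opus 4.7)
The plan is to verify the hypothesis of Lemma~\ref{l:major-subsets}, in exact parallel with the proof of Theorem~\ref{t:diag-major}. First, the equality of total sums
$$
\sum_{u\in\la} a(u) \, = \, \sum_{u\in\la} a^\ast(u)
$$
is immediate: both sides count pairs $(x,y)\in\la\times\la$ with $x_1\le y_1$ and $x_2\le y_2$. For the right-hand side one uses the defining property of a Young diagram, namely that $(i,j)\in\la$ forces $[1,i]\times[1,j]\subseteq\la$, so $a^\ast(i,j)=i\cdot j$ is exactly the number of cells of $\la$ weakly above-and-left of $(i,j)$. This is the ``area'' analogue of Observation~\ref{o:hook-diag} and supplies the equality part of majorization.

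The bulk of the work is the subset inequality: for every $T\subseteq\la$, exhibit $S\subseteq\la$ with $|S|=|T|$ and $\sum_{u\in S} a(u) \ge \sum_{v\in T} a^\ast(v)$. My approach is to rewrite both sides as counts of order pairs,
$$
\sum_{v\in T} a^\ast(v) \, = \, \bigl|\{(u,v)\in\la\times T:\, u\le v\}\bigr|, \qquad
\sum_{u\in S} a(u) \, = \, \bigl|\{(u,w)\in S\times\la:\, u\le w\}\bigr|,
$$
and take $S=\phi(T)$ for an injection $\phi\colon T\to\la$ satisfying $\phi(v)\le v$ coordinate-wise. Such a $\phi$ exists by Hall's marriage theorem, since for any $T'\subseteq T$ the union $\bigcup_{v=(v_1,v_2)\in T'}[1,v_1]\times[1,v_2]$ is the order ideal of $\la$ generated by $T'$, which already contains $T'$ and hence has at least $|T'|$ elements.

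The main obstacle is choosing $\phi$ cleverly enough that the resulting area sum is large enough: the identity map $\phi=\id$ is insufficient, since $a(v)$ can be much smaller than $a^\ast(v)$ (e.g.\ for $\la=(3,2)$, $T=\{(2,2)\}$, where $a(2,2)=1<4=a^\ast(2,2)$). My proposal is to process the cells of $T$ in decreasing order of $a^\ast(v)$, at each step assigning $v=(v_1,v_2)$ to an unused cell in $[1,v_1]\times[1,v_2]$ maximizing $a(\cdot)$; geometrically, this pushes the $a^\ast$-heaviest cells of $T$ as far toward the corner $(1,1)$ as possible. Verifying that this greedy/maximum-weight matching actually achieves $\sum a(\phi(v))\ge\sum a^\ast(v)$ is the technical heart of the argument, which I would carry out by induction on $|T|$, using a Hall-type deficiency bound together with the monotonicity of $a$ on $\la$ to show that the available ``supply'' $\sum a(u)$ over unused cells always dominates the remaining ``demand.'' Once the subset condition is verified, Lemma~\ref{l:major-subsets} yields $\ca_\la\trianglerighteq\ca^\ast_\la$, and then Theorem~\ref{t:karam} applied with $\vp(z)=-\log z$ recovers Corollary~\ref{c:diag-area}, exactly as in the hook case.
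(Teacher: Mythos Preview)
Your overall framework matches the paper's: reduce to Lemma~\ref{l:major-subsets}, handle the equality of total sums by the obvious double count, and for each $T\subseteq\la$ exhibit $S\subseteq\la$ of the same size with $\sum_{u\in S}a(u)\ge\sum_{v\in T}a^\ast(v)$. The first paragraph is fine.

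The gap is that you never actually construct~$S$. You correctly observe that the identity fails and that \emph{some} injection $\phi$ with $\phi(v)\le v$ exists by Hall, but you then hand off the entire content of the theorem to an unspecified ``induction on $|T|$, using a Hall-type deficiency bound together with the monotonicity of~$a$.'' That sentence is where the proof lives, and nothing in your outline explains why the greedy assignment succeeds: when you place the $a^\ast$-heaviest cell at the $a$-maximizing available spot, you may block exactly the rectangle needed for a later cell, and neither Hall's condition nor monotonicity of $a$ by itself controls the resulting deficit. As written this is a hope, not an argument.

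The paper avoids matching and greed entirely by giving an explicit $S$: take $X=T$, push every cell of $X$ up to the top of its column to get $X'$, then push every cell of $X'$ left to the start of its row to get $Y=S$. The comparison is made tractable by decomposing the areas into shift indicators $a_{pq}(i,j)$ (equal to $1$ when the $(p,q)$-translate of $(i,j)$ lies in $\la$) and introducing an intermediate \emph{semi-area} $a^{\circ}$ that uses the down direction in one coordinate and the up direction in the other. For each fixed $(p,q)\in\nn^2$ one has, directly from the shape of a Young diagram,
\[
\sum_{x\in X} a_{-p,-q}(x) \;\le\; \sum_{x'\in X'} a_{p,-q}(x') \;\le\; \sum_{y\in Y} a_{p,q}(y),
\]
and summing over $(p,q)$ gives $a^\ast(X)\le a^\circ(X')\le a(Y)$ in one stroke. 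This is the exact two-step shuffle already used for Theorem~\ref{t:diag-major}; the only new ingredient is the indicator decomposition, which replaces your proposed induction with a pointwise inequality.
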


By analogy with the argument above, obtain Corollary~\ref{c:diag-area} from
Theorem~\ref{t:area-major} and Theorem~\ref{t:karam} by taking logs on both sides.

\begin{thm}\label{t:tree-major}
Let $\tau$ be a rooted tree. Denote by \ts
$\cR_\tau =\bigl\{\br(v), \ts v\in\tau\bigr\}$ \ts the multiset of branch
numbers, and by \ts $\cd_\tau = \bigl\{d(v), \ts v\in\tau\bigr\}$ \ts
the multiset of distance numbers.
Then \ts $\cR_\tau \trianglerighteq \cd_\tau$.
\end{thm}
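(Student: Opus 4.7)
The plan is to prove $\cR_\tau \trianglerighteq \cd_\tau$ by induction on $n = |V(\tau)|$, the base case $n=1$ being trivial. For the inductive step I decompose $\tau$ as a root $R$ with pendant subtrees $\tau_1,\ldots,\tau_m$ of sizes $n_1,\ldots,n_m$, so $n = 1 + \sum_i n_i$. The inductive hypothesis applied to each $\tau_i$ gives $\cR_{\tau_i} \trianglerighteq \cd_{\tau_i}$, and invoking the standard fact that majorization is stable under disjoint union of multisets (a short consequence of Lemma~\ref{l:major-subsets} and its straightforward converse), these assemble to
\[
\cA \, := \, \bigsqcup_{i=1}^m \cR_{\tau_i} \, \trianglerighteq \, \cB \, := \, \bigsqcup_{i=1}^m \cd_{\tau_i},
\]
with $|\cA| = |\cB| = n-1$.

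Next I translate back to $\tau$. Since $b_\tau(R)=n$, $d_\tau(R)=1$, and $b_\tau(v)=b_{\tau_i}(v)$, $d_\tau(v)=d_{\tau_i}(v)+1$ for $v \in V(\tau_i)$, one obtains $\cR_\tau = \{n\} \cup \cA$ and $\cd_\tau = \{1\} \cup (\cB+\mathbf{1})$, where $\cB+\mathbf{1}$ denotes shifting each element by $+1$. Every element of $\cA$ is at most $\max_i n_i \le n-1$, so $n$ is strictly the largest element of $\cR_\tau$; similarly $1$ is strictly the smallest element of $\cd_\tau$. Writing $a_{(1)}\ge\cdots\ge a_{(n-1)}$ and $b_{(1)}\ge\cdots\ge b_{(n-1)}$ for the non-increasing rearrangements of $\cA$ and $\cB$, the majorization inequality $\cR_\tau \trianglerighteq \cd_\tau$ at level $k \in \{2,\ldots,n-1\}$ becomes
\[
n + \sum_{i=1}^{k-1} a_{(i)} \, \ge \, k + \sum_{i=1}^{k} b_{(i)}.
\]
Using $\cA \trianglerighteq \cB$ to bound $\sum_{i<k} a_{(i)} \ge \sum_{i<k} b_{(i)}$, this reduces to the pointwise estimate $b_{(k)} \le n-k$ for $1 \le k \le n-1$; the $k=1$ case is a direct instance of this estimate, and the $k=n$ case is the total-sum equality from Observation~\ref{o:hook-tree}.

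The decisive tree-theoretic ingredient is a short auxiliary lemma: for any rooted tree $\tau$ on $n$ vertices and any integer $t \ge 1$,
\[
\bigl|\{v \in V(\tau) \,:\, d_\tau(v) \ge t\}\bigr| \, \le \, n-t+1.
\]
This is immediate from the observation that any such $v$ has $t-1$ distinct ancestors at strictly smaller depths, all of which lie outside the set in question. Since $d_\tau(R)=1$ is the unique minimum of $\{d_\tau(v)\}$, the $k$-th largest $d_\tau$-value over $V(\tau) \setminus \{R\}$ coincides with the $k$-th largest over $V(\tau)$ when $k \le n-1$, so applying the displayed bound at $t = n-k+1$ yields $b_{(k)} \le (n-k+1) - 1 = n - k$.

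The main obstacle is less a technical one than a bookkeeping one: identifying $n$ and $1$ as the extreme elements of $\cR_\tau$ and $\cd_\tau$, combining the subtree majorizations via the union property, and isolating the clean pointwise reduction $b_{(k)} \le n-k$. Once this scaffolding is in place, the ancestor-count lemma itself is a one-line observation about tree structure, and the remaining algebra is routine.
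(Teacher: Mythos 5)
Your proof is correct, but it takes a genuinely different route from the paper. The paper proves Theorem~\ref{t:tree-major} by a ``shuffling'' argument: given any subset $X$ of vertices, it greedily relocates the vertex of $X$ of smallest depth to the root (using $\br(R)=|\tau|\ge d(w)$) and recurses into the branches, producing a subset $Y$ with $\sum_{v\in Y}\br(v)\ge\sum_{v\in X}d(v)$; Lemma~\ref{l:major-subsets} then yields the majorization. This matches the uniform shuffling template the paper uses for Young diagrams, solid partitions, and products of trees. You instead induct on $|\tau|$ by deleting the root, combine the subtree majorizations via stability of $\trianglerighteq$ under disjoint union, and observe that the single extremal values $\br(R)=n$ and $d(R)=1$ reduce everything to the pointwise bound that the $k$-th largest depth is at most $n-k+1$, which follows from counting ancestors. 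Your accounting checks out: the non-increasing rearrangements of $\cR_\tau$ and $\cd_\tau$ do begin with $n$ and end with $1$ respectively, the level-$k$ partial-sum inequality does reduce (via $\cA\trianglerighteq\cB$) to $n\ge k+b_{(k)}$, and the ancestor-count lemma gives exactly $b_{(k)}\le n-k$ (the only quibble is that you should apply the displayed bound at $t$ equal to the $k$-th largest depth itself, or at $t=n-k+2$ for a contradiction, rather than at $t=n-k+1$; the conclusion is unaffected). What each approach buys: the paper's shuffling is designed to generalize verbatim to higher-dimensional posets and weighted hooks, whereas your argument is self-contained, avoids constructing matched subsets entirely, and isolates a clean quantitative fact about the order statistics of the depth sequence that the shuffling proof leaves implicit.
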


Now Corollary~\ref{cor:tree-var} follows from Theorem~\ref{t:tree-major}
and Theorem~\ref{t:karam} by taking $\vp(z)=z^2$, see the argument above.

\bigskip

\section{Shuffling in the plane}\label{s:shuffle}

\subsection{Hook majorization}  \label{s:shuffle-hooks}
For a square $(i,j)\in \la$, denote by $\al(i,j)=\la_i-j$ the
\emph{arm} at~$(i,j)$, and by $\be(i,j)=\la_j'-i$ the
\emph{leg} at~$(i,j)$. Similarly, denote by $\al^\ast(i,j)=j$
and $\be^\ast(i,j)=i$ the \emph{anti-arm} and \emph{anti-leg}.

\medskip

\nin
{\sc Shuffling in the plane.} \ts Let $X\ssuq \la$ be a subset of squares.
Perform two steps:


\nin
$(1)$ \. Push all squares in $X$ all the way up inside each column,


\nin
$(2)$ \. Push all squares in $X$ all the way left inside each row.

\begin{figure}[hbt]
\begin{center}
\includegraphics[scale=0.6]{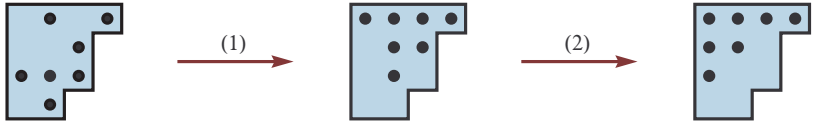}
\caption{Shuffling of $7$ squares in $\la=(4,3,3,2)$. }
\label{f:shuffle}
\end{center}
\end{figure}

\begin{proof}[Proof of Theorem~\ref{t:diag-major}]
Take $X=\{x_1,\ldots,x_k\}\ssuq \la$ and shuffle it as above.  After step~$(1)$, we
obtain set $X'=\{x_1',\ldots,x_k'\}\ssuq \la$, s.t.
\begin{equation}\label{eq:step-1}
\aligned
 \al(x'_i) \ge  \al(x_i), \quad \al^\ast(x'_i) = \al^\ast(x_i) \quad \text{for all} \ \ \.  1\le i\le k, \ \, \text{and} \\
 \be(x'_1) + \ldots \ts + \ts\be(x'_k) \. \ge \. \be^\ast(x_1)  +  \ldots  + \be^\ast(x_k)\ts. \hskip.5cm
\endaligned
\end{equation}
The first two inequalities hold by definition.  The last inequality hold
since all squares are pushed up, where the maximize the legs (cf.\ the
proof of Lemma~\ref{l:major-subsets}).

Similarly, after step~$(2)$ applied to $X'$, we
obtain set $Y=\{y_1,\ldots,y_k\}\ssuq \la$, s.t.
\begin{equation}\label{eq:step-2}
\aligned
\al(y_1) + \ldots +\al(y_k) \. \ge \. \al^\ast(x'_1) + \ldots +\al^\ast(x'_k), \ \, \text{and} \\
\be(y_i) \ge  \be(x_i), \quad \be^\ast(y_i) = \be^\ast(x'_i) \quad \text{for all} \ \ \.  1\le i\le k\ts.
\endaligned
\end{equation}
Combining equations~\eqref{eq:step-1} and~\eqref{eq:step-2} we obtain:
\begin{equation}\label{eq:step-12}
\aligned
\al(y_1) + \ldots +\al(y_k) \. \ge \. \al^\ast(x'_1) + \ldots +\al^\ast(x'_k) \. = \. \al^\ast(x_1) + \ldots +\al^\ast(x_k)\ts,\\
\be(y_1) \ts + \ts \ldots \ts +\ts \be(y_k) \. \ge \, \be(x'_1) \ts + \ts \ldots \ts + \ts \be(x'_k) \, \ge \. \be^\ast(x_1)
\ts + \ts \ldots \ts + \ts \be^\ast(x_k)\ts. \. \endaligned
\end{equation}
Using
$$h(i,j)\. = \. \al(i,j)+\be(i,j)+1\ts, \qquad h^\ast(i,j)\. = \. \al^\ast(i,j)+\be^\ast(i,j)+1
$$
we can sum two equations in~\eqref{eq:step-12} to obtain
\begin{equation}\label{eq:step-hook}
h(y_1) + \ldots + h(y_k) \. \ge \.  h^\ast(x_1) + \ldots + h^\ast(x_k)\ts.
\end{equation}
In summary, for every $X\ssuq \la$ there exists $Y\ssuq \la$, $|Y|=|X|$,
such that~\eqref{eq:step-hook} holds. Now by Lemma~\ref{l:major-subsets}
we obtain the result.
\end{proof}

\medskip

\subsection{Proof deconstruction}  \label{s:shuffle-deconstruction}
Before we proceed to several generalizations let us reformulate
the proof of Theorem~\ref{t:diag-major} in a different language.

Let \ts $h^\circ(i,j) = i + \la_j'-j = \al^\ast(i,j) + \be(i,j) -1$ \ts
be the \emph{semi-hook length}. For a subset $X$ of squares in~$\la$,
denote by $h(X)$, $h^\circ(X)$ and $h^\ast(X)$ the sums of hooks,
semi-hooks and anti-hooks numbers, respectively.

In the proof above, the sum of inequalities in~\eqref{eq:step-1}
and in~\eqref{eq:step-2}, gives
$$h^\ast(X) \. \le \. h^\circ(X) \quad \text{and} \quad
h^\circ(X) \. \le \. h(X),
$$
respectively. Combined, these give
\ts $h^\ast(X) \. \le \. h(X)$, a consequence of
inequalities in~\eqref{eq:step-12}.  This implies majorization
by Lemma~\ref{l:major-subsets}. \ $\sq$

\smallskip

Note that in the proof we split hooks and anti-hooks
into sums of arms/legs and anti-arms/anti-legs, respectively.
This is out of necessity as individually these parameters
are easier to control. The approach we take below to split
the generalized hooks into even more parameters.

\medskip

\subsection{Area majorization}  \label{s:shuffle-area}
We need new notation.
For a square $(i,j)\in \la$ and integers $p,q \in \zz$, denote
$$a_{pq}(i,j) \. := \. \left\{\aligned
& 1 \quad \text{if} \ \.\{ p, p+i\} \times \{ q,q+j\}   \subset \la\ts,\\
& 0 \quad \text{otherwise.}
\endaligned\right.
$$
Note that when $p$ and $q$ have the same sign, we have $a_{pq}(i,j)=1$
if and only if $(p+i,q+j)\in \la$.
%
Then
$$
a(i,j) \, := \. \sum_{(p,q) \in \nn^2} \. a_{pq}(i,j)\., \qquad
a^\ast(i,j) \, := \. \sum_{(p,q) \in \nn^2} \. a_{-p,-q}(i,j)\ts.
$$
Indeed, think of subscript $(p,q)$ as the vector shift.  Then the first sum is the number
of squares in~$\la$ to the right and below~$(i,j)$,  while the second sum is the number
of squares in~$\la$ to the left and above~$(i,j)$, as desired.

\begin{proof}[Proof of Theorem~\ref{t:area-major}]
We proceed as in the proof above.
Let
$$
a^\circ(i,j) \, := \. \sum_{(p,q) \in \nn^2} \. a_{p,-q}(i,j)\.
$$
For a subset of squares \ts $X\ssuq \la$ \ts denote by
$$
a_{pq}(X)\. := \. \sum_{(i,j)\in X} \. a_{p,q}(i,j), \quad
a_{pq}^\circ(X)  := \. \sum_{(i,j)\in X} \. a^\circ_{p,-q}(i,j), \quad
a_{pq}^\ast(X)  := \. \sum_{(i,j)\in X} \. a^\ast_{-p,-q}(i,j).
$$
Now consider the shuffling of squares \ts $X\to X' \to Y$ \ts
as in the proof above.  For every $(p,q)\in \nn$, we have:
$$
a_{pq}^\ast(X) \, \le \, a_{pq}^\circ(X') \, \le \, a_{pq}(Y)\ts.
$$
This follows simply by geometry of partitions: rows above are
larger or equal than rows below, and the columns to the left
are larger or equal than columns to the right.  Summing this
over all $(p,q)\in \nn$, we conclude:
$$
a^\ast(X) \, \le \, a^\circ(X') \, \le \, a(Y)\ts.
$$
This implies majorization by Lemma~\ref{l:major-subsets}.
\end{proof}

\medskip

\subsection{Generalization to weighted squares}  \label{s:shuffle-function}
We need new notation.  Fix a function \ts $g: \nn^2 \to \rr_+$.
For a square $(i,j)\in \la$, denote
$$\psi_{pq}(i,j) \. := \. \left\{\aligned
& g(p,q) \quad \text{if} \ \.(p+i,q+j)\in \la\ts,\\
& 0 \quad \text{otherwise.}
\endaligned\right.
$$
and
$$
\psi(i,j) \, := \. \sum_{(p,q) \in \nn^2} \. \psi_{pq}(i,j)\., \qquad
\psi^\ast(i,j) \, := \. \sum_{(p,q) \in \nn^2} \. \psi_{i-p,j-q}(p,q)\ts.
$$
Think of subscript $(p,q)$ as the vector shift.  Then $\psi(i,j)$ is a sum of function~$g$
over the fourth quadrant starting at~$(i,j)$,  while $\psi^\ast(i,j)$ is a sum of function~$g$
over the second quadrant starting at~$(i,j)$.

For example, when \ts $g(p,q)=1$ \ts we get the area and
anti-area numbers: \ts $\psi(i,j)=a(i,j)$ \ts
and \ts $\psi^\ast(i,j)=a^\ast(i,j)$.  Similarly, let
\begin{equation}\label{eq:function-g-hook}
g(p,q) \. := \. \left\{\aligned
& 1 \quad \text{if} \  \ p=0 \ \, \text{or} \ \, q=0,\\
& 0 \quad \text{otherwise.}
\endaligned\right.
\end{equation}
Then we get the hook and anti-hook numbers in this case: \ts $\psi(i,j)=h(i,j)$ \ts
and \ts $\psi^\ast(i,j)=h^\ast(i,j)$.

\begin{thm}\label{t:diag-function}
Let $\la$ be a Young diagram and fix \ts $g: \nn^2 \to \rr_+$. Denote by
$$
\Psi\langle\la,g\rangle \. := \, \bigl\{\psi(i,j), \. (i,j)\in\la\bigr\} \quad \text{and}
\quad \Psi^\ast\langle\la,g\rangle \. := \,  \bigl\{\psi^\ast(i,j), \. (i,j)\in\la\bigr\}
$$
the multisets of numbers defined above.  Then \ts
$\Psi\langle\la,g\rangle \trianglerighteq \Psi^\ast\langle\la,g\rangle$.
\end{thm}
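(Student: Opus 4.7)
The plan is to mimic the shuffling argument from the proof of Theorem~\ref{t:area-major}, with the nonnegative weight $g$ appearing as a harmless multiplier on each per-shift inequality. By Lemma~\ref{l:major-subsets}, it suffices to show that for every $X \subseteq \la$, the shuffled set $Y \subseteq \la$ (obtained by first pushing the squares of $X$ up inside each column and then pushing them left inside each row) satisfies
\[
\sum_{(i,j) \in X} \psi^\ast(i,j) \, \le \, \sum_{(i,j) \in Y} \psi(i,j).
\]

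Interchanging the order of summation, I rewrite both sides as weighted sums over shifts $(p,q) \in \nn^2$:
\[
\sum_{(i,j) \in X} \psi^\ast(i,j) \,=\, \sum_{(p,q) \in \nn^2} g(p,q)\, N^\ast_{pq}(X), \qquad \sum_{(i,j) \in Y} \psi(i,j) \,=\, \sum_{(p,q) \in \nn^2} g(p,q)\, N_{pq}(Y),
\]
where $N^\ast_{pq}(X) := |\{(i,j) \in X : (i-p,j-q) \in \la\}|$ and $N_{pq}(Y) := |\{(i,j) \in Y : (i+p,j+q) \in \la\}|$ are exactly the counts $a^\ast_{pq}(X)$ and $a_{pq}(Y)$ appearing in the area proof of Section~\ref{s:shuffle-area}. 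Since $g(p,q) \ge 0$, the theorem reduces to the per-shift inequality $N^\ast_{pq}(X) \le N_{pq}(Y)$ for every $(p,q) \in \nn^2$, which is precisely the $g \equiv 1$ case handled in the proof of Theorem~\ref{t:area-major}, via the intermediate count $N^\circ_{pq}(X') := |\{(i,j) \in X' : (i+p,j-q) \in \la\}|$ and the chain $N^\ast_{pq}(X) \le N^\circ_{pq}(X') \le N_{pq}(Y)$.

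Granting the per-shift inequality, multiplying each instance by the nonnegative weight $g(p,q)$ and summing over $(p,q)$ yields $\psi^\ast(X) \le \psi(Y)$, and Lemma~\ref{l:major-subsets} produces the majorization $\Psi\langle\la,g\rangle \trianglerighteq \Psi^\ast\langle\la,g\rangle$. The main conceptual step is the interchange of summation that isolates the weight $g(p,q)$ from the per-shift indicator, factoring the proof into the per-shift geometric content (provided by Theorem~\ref{t:area-major}) and a clean multiplication by $g \ge 0$. No new geometric obstacle arises beyond the area case, since the argument is linear in the indicator for each fixed shift $(p,q)$; the only subtle point is confirming that the per-shift inequality really does hold for all $(p,q) \in \nn^2$ and not only after averaging, which follows from the one-dimensional column-wise and row-wise comparisons in the shuffling argument.
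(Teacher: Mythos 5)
Your proposal is correct and is essentially the argument the paper intends: the paper explicitly says the proof of Theorem~\ref{t:diag-function} ``follows verbatim the proof of Theorem~\ref{t:area-major},'' whose proof is already organized per shift $(p,q)$ via the chain $a^\ast_{pq}(X) \le a^\circ_{pq}(X') \le a_{pq}(Y)$; your interchange of summation and multiplication of each per-shift inequality by $g(p,q)\ge 0$ is exactly that omitted routine step. No substantive difference.
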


This result is a common generalization of both Theorem~\ref{t:area-major}
and Theorem~\ref{t:diag-major}.  In the examples above, when
when \ts $g(p,q)=1$ \ts we have $\Psi\langle\la,g\rangle=\ca_\la$ and
$\Psi^\ast\langle\la,g\rangle=\ca^\ast_\la$.  Similarly, for $g$ as
in~\eqref{eq:function-g-hook}, we have $\Psi\langle\la,g\rangle=\ch_\la$ and
$\Psi^\ast\langle\la,g\rangle=\ch^\ast_\la$.  The proof follows verbatim
the proof of Theorem~\ref{t:area-major}, we omit the details.

\bigskip

\section{Generalized shuffling}\label{s:space}

\subsection{Hooks in space}  \label{s:space-hooks}
In $\rr^3$, \emph{solid partitions}~$\La$ are defined as lower
order ideals of the poset $\bigl(\nn^3, \prec\bigr)$, with the natural
ordering making $(0,0,0)$ the smallest.  We refer to ``up'' and ``down''
axis parallel direction in $\rr^3$ if it increases of decreases
coordinates, respectively.

Note that solid partitions generalize Young diagrams and
can be viewed as arrangements of unit cubes, such that whenever
$(i,j,k)\in \La$ we also have $(p,q,r)\in \La$ for all $p\le i$,
$q\le j$, and $r\le k$.

Now, there are two natural notions of the hooks: 1- and 2-dimensional.
More precisely, let
$$\aligned
R(i,j,k) \. & := \. \bigl|\bigl\{(p,j,k)\in \La~:~i\le p\bigr\}  \cup
\bigl\{(i,q,k)\in \La~:~j\le q\bigr\}  \cup
\bigl\{(i,j,r)\in \La~:~k\le r\bigr\}\bigr| \\
Q(i,j,k) \. & := \. \bigl|\bigl\{(p,q,k)\in \La~:~i\le p,\ts j\le q\bigr\}  \cup
\bigl\{(p,j,r)\in \La~:~i\le p, \ts k \le r\bigr\} \\
& \hskip7.cm \cup
\bigl\{(i,q,r)\in \La~:~j\le q, \ts k\le r\bigr\}\bigr| \endaligned
$$
These are subsets of squares of~$\La$ along the 1-dimensional rays or
2-dimensional quadrants emanating from a cube $(i,j,k)\in \La$.  The
anti-hooks $R^\ast(i,j,k)$ and $Q^\ast(i,j,k)$ are defined analogously,
with the direction of rays/quadrants reversed.

\begin{thm}\label{t:space-major}
Let $\La$ be a solid partition. Denote by $\cR_\La$, $\cQ_\La$,
$\cR^\ast_\La$, and $\cQ^\ast_\La$ the multisets of 1- and 2-dimensional
hook numbers and anti-hook numbers defined above.  Then \ts
$\cR_\La \trianglerighteq \cR^\ast_\La$ \ts and \ts $\cQ_\La
\trianglerighteq \cQ^\ast_\La$.
\end{thm}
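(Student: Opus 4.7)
The plan is to extend the shuffling technique of Section~\ref{s:shuffle} to three axis directions.  By Lemma~\ref{l:major-subsets} it suffices, for every subset $X\ssuq\La$ with $|X|=k$, to exhibit $Y\ssuq\La$ with $|Y|=k$ satisfying
\[
\sum_{y\in Y}R(y)\. \ge\. \sum_{x\in X}R^\ast(x) \qquad \text{and} \qquad \sum_{y\in Y}Q(y)\. \ge\. \sum_{x\in X}Q^\ast(x)\ts.
\]
Given $X$, I would produce $Y$ by three successive shuffles:  push all cubes of $X$ as far as possible toward $i=1$ inside each axis-$1$ line to obtain $X^{(1)}$; then push $X^{(1)}$ toward $j=1$ inside each axis-$2$ line to obtain $X^{(2)}$; finally push $X^{(2)}$ toward $k=1$ inside each axis-$3$ line to obtain $Y$.

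For $\cR$, decompose $R(u) = \al_1(u)+\al_2(u)+\al_3(u)+1$ and $R^\ast(u) = \al^\ast_1(u)+\al^\ast_2(u)+\al^\ast_3(u)+1$, where $\al_s$ and $\al^\ast_s$ denote the 1-dimensional arm and anti-arm of $u$ along axis~$s$.  Two facts drive the argument.  First, since $\La$ is a lower order ideal of $\nn^3$, sliding a cube toward the origin along axis~$s$ cannot decrease $\al_t$ for any $t\ne s$ (the transverse fibres can only grow longer).  Second, the 1-dimensional calculation from the proof of Theorem~\ref{t:diag-major}, applied inside each axis-$s$ fibre separately, yields
\[
\sum_{x\in X^{(s-1)}}\al^\ast_s(x)\. \le\. \sum_{x'\in X^{(s)}}\al_s(x')\ts.
\]
Since $\al^\ast_s(u)$ depends only on the $s$-th coordinate of $u$, which is preserved by pushes along the other two axes, chaining these three inequalities and adding $+1$ per cube gives $\sum_{y\in Y}R(y)\ge\sum_{x\in X}R^\ast(x)$.

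For $\cQ$, it is cleanest to prove a 3-dimensional analogue of Theorem~\ref{t:diag-function} at once and then specialize.  For a weight $g:\nn^3\to\rr_+$, sign pattern $(\ep_1,\ep_2,\ep_3)\in\{+,-\}^3$, and cube $u=(i,j,k)\in\La$ set
\[
\chi^{\ep_1\ep_2\ep_3}_{pqr}(u) \. := \. \mathbf{1}\bigl[(i+\ep_1 p,\ts j+\ep_2 q,\ts k+\ep_3 r)\in\La\bigr]\ts,
\]
and define $\psi(u):=\sum g(p,q,r)\chi^{+++}_{pqr}(u)$ and $\psi^\ast(u):=\sum g(p,q,r)\chi^{---}_{pqr}(u)$.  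The same three-step shuffling should convert the $\psi^\ast$-sum over $X$ into the $\psi$-sum over $Y$ through two intermediate ``semi''-quantities with sign patterns $(+,-,-)$ and $(+,+,-)$, generalizing the semi-hook $h^\circ$ of Section~\ref{s:shuffle-deconstruction}.  Specializing $g$ to the indicator of $\{(p,q,r):\text{at most one coordinate is nonzero}\}$ recovers $R,R^\ast$; the indicator of $\{(p,q,r):\text{at least one coordinate is zero}\}$ recovers $Q,Q^\ast$.

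The main obstacle is the fibrewise sign-flip estimate:  for each fixed $(p,q,r)$, the push along axis~$s$ should flip $\ep_s$ from $-$ to $+$ while weakly increasing $\sum_u \chi^{\ep_1\ep_2\ep_3}_{pqr}(u)$, uniformly in the remaining two signs.  Once this is established by a direct adaptation of the 1-dimensional argument from step~(1) of the proof of Theorem~\ref{t:diag-major} (using monotonicity of $\La$'s fibre lengths in the transverse directions), the rest is routine:  chain the three pushes, sum over $(p,q,r)$ against $g$, and invoke Lemma~\ref{l:major-subsets}.  A minor bookkeeping point is that the axis-$s$ fibres relevant at step~$s$ are indexed by the $s$-th coordinate of the current subset, which equals the original one because each preceding push preserved that coordinate.
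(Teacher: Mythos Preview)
Your treatment of $\cR_\La\trianglerighteq\cR_\La^\ast$ is correct and is exactly the paper's argument: three successive axis pushes, tracked either via the arm decomposition $R=\al_1+\al_2+\al_3+1$ or, equivalently, via the intermediate semi-hooks $R^\circ,R^{\circ\circ}$ the paper introduces.

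For $\cQ_\La$ (and your general weighted statement) there is a genuine gap.  With your definition
\[
\chi^{\ep_1\ep_2\ep_3}_{pqr}(u)\;=\;\mathbf{1}\bigl[(i+\ep_1 p,\ts j+\ep_2 q,\ts k+\ep_3 r)\in\La\bigr],
\]
the ``fibrewise sign-flip estimate'' you isolate as the main obstacle is \emph{false} for each fixed $(p,q,r)$ once one sign is already $+$.  A two-dimensional instance already breaks it: take $\la=(2,1)$, $X^{(1)}=\{(1,2)\}$ (already pushed up), and push along axis~$2$ to $X^{(2)}=\{(1,1)\}$.  For $(p,q)=(1,1)$ one gets
\[
\chi^{+-}_{1,1}(1,2)=\mathbf{1}\bigl[(2,1)\in\la\bigr]=1
\quad\text{but}\quad
\chi^{++}_{1,1}(1,1)=\mathbf{1}\bigl[(2,2)\in\la\bigr]=0.
\]
So the second push does not flip the second sign monotonically at fixed $(p,q)$, and your chaining argument collapses.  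The difficulty is that after the first flip, the point $(i+p,j-q)$ can land in a \emph{longer} transverse fibre than the one containing $u$ itself, so the one-dimensional count from step~(1) no longer dominates.

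The repair is exactly what the paper's proof of Theorem~\ref{t:area-major} does (and which, per the paper, drives Theorem~\ref{t:diag-function} ``verbatim''): replace the single-point condition by the bounding-box condition
\[
\wt\chi^{\,\ep_1\ep_2\ep_3}_{pqr}(u)\;=\;\mathbf{1}\bigl[\{i,i+\ep_1 p\}\times\{j,j+\ep_2 q\}\times\{k,k+\ep_3 r\}\subset\La\bigr].
\]
For the extreme patterns $(+,+,+)$ and $(-,-,-)$ this agrees with your $\chi$, hence still encodes $Q$ and $Q^\ast$ after weighting; but for mixed patterns it is strictly smaller, and \emph{now} each axis-$s$ push does flip $\ep_s$ while weakly increasing the sum at every fixed $(p,q,r)$.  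In the counterexample above, $\wt\chi^{+-}_{1,1}(1,2)=\mathbf{1}[(2,2)\in\la]=0$, and the inequality is restored.  With this corrected intermediate your three-step chain goes through, and Lemma~\ref{l:major-subsets} finishes the proof as you outlined.
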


By analogy with the arguments in Section~\ref{s:maj}, we obtain the
following generalizations of Theorem~\ref{t:diag-hook}.

\begin{cor}\label{c:space-hook}
Let $\La$ be a solid partition. Then:
$$
\prod_{(i,j,k)\in \La} \.  R(i,j,k) \, \le \prod_{(i,j,k)\in \La} \.  R^\ast(i,j,k), \quad \text{and} \quad
\prod_{(i,j,k)\in \La} \.  Q(i,j,k) \, \le \prod_{(i,j,k)\in \La} \.  Q^\ast(i,j,k).
$$
\end{cor}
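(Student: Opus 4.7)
The plan is to deduce Corollary~\ref{c:space-hook} from Theorem~\ref{t:space-major} by exactly the same two-step transfer that converted Theorem~\ref{t:diag-major} into Theorem~\ref{t:diag-hook} in Section~\ref{s:maj}. Once one has majorization between two multisets of positive reals, Karamata's inequality (Theorem~\ref{t:karam}) applied to any strictly convex $\vp$ yields the corresponding inequality of sums of~$\vp$, and the choice $\vp(z) = -\log z$ translates this into a product inequality.

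Concretely, I would first observe that every cube $(i,j,k) \in \La$ belongs to all of the axial rays and coordinate quadrants used to define $R(i,j,k)$, $R^\ast(i,j,k)$, $Q(i,j,k)$, and $Q^\ast(i,j,k)$. Hence each of these quantities is a positive integer, so the function $\vp(z) = -\log z$ is well defined and strictly convex on the supports of the multisets $\cR_\La$, $\cR^\ast_\La$, $\cQ_\La$, $\cQ^\ast_\La$.

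Next, I would apply Karamata's inequality to the majorization $\cR_\La \trianglerighteq \cR^\ast_\La$ supplied by Theorem~\ref{t:space-major}, together with $\vp(z) = -\log z$, to obtain
$$
\sum_{u \in \La} \bigl(-\log R(u)\bigr) \, \ge \, \sum_{u \in \La} \bigl(-\log R^\ast(u)\bigr).
$$
Multiplying by $-1$ and exponentiating converts this directly into the first product inequality of the corollary. Running the exact same argument with $\cQ_\La \trianglerighteq \cQ^\ast_\La$ in place of $\cR_\La \trianglerighteq \cR^\ast_\La$ gives the second.

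There is essentially no obstacle: all the combinatorial work has already been absorbed into Theorem~\ref{t:space-major} via the three-dimensional shuffling construction, and the deduction here is a verbatim copy of the passages in $\S\ref{s:maj}$ that derived Theorem~\ref{t:diag-hook} from Theorem~\ref{t:diag-major} and Corollary~\ref{c:diag-area} from Theorem~\ref{t:area-major}. The only minor point requiring care is strict positivity of the hook- and anti-hook numbers, which was handled above; as a bonus, the strict convexity of $-\log$ would also yield an equality characterization for free if one wished to state one.
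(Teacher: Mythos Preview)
Your proposal is correct and matches the paper's approach exactly: the paper simply says the corollary follows ``by analogy with the arguments in Section~\ref{s:maj},'' meaning one applies Karamata's inequality (Theorem~\ref{t:karam}) with $\vp(z)=-\log z$ to the majorizations $\cR_\La \trianglerighteq \cR^\ast_\La$ and $\cQ_\La \trianglerighteq \cQ^\ast_\La$ of Theorem~\ref{t:space-major}. Your added remark about positivity of the hook and anti-hook numbers is a welcome clarification that the paper leaves implicit.
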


There is also volume and anti-volume numbers, generalizing area and anti-area numbers:
$$
\aligned
V(i,j,k) \. & := \. \bigl|\bigl\{(p,q,r)\in \La~:~i\le p, \ts j\le q, \ts k \le r \bigr\} \bigr|\ts, \\
V^\ast(i,j,k) \. & := \. \bigl|\bigl\{(p,q,r)\in \La~:~p\le i, \ts q\le j, \ts r \le k\bigr\} \bigr|\ts.
\endaligned
$$

\begin{thm}\label{t:space-vol}
Let $\La$ be a solid partition. Denote by $\cV_\La$ and $\cV^\ast_\La$
the multisets of volume and anti-volume numbers defined above.  Then \ts
$\cV_\La \trianglerighteq \cV^\ast_\La$.
\end{thm}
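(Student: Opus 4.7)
My plan is to adapt the three-step shuffling that proved Theorem~\ref{t:area-major} to three dimensions. For $X \subseteq \La$, construct $Y = X_3$ by sequentially packing cubes: within each $(i,j)$-column push $X$ to smallest $k$ (producing $X_1$); within each $(i,k)$-row of $X_1$ push to smallest $j$ (producing $X_2$); within each $(j,k)$-line of $X_2$ push to smallest $i$ (producing $X_3=Y$). By construction $|Y|=|X|$ and $Y$ is a solid sub-partition of $\La$. By Lemma~\ref{l:major-subsets}, the majorization $\cV_\La \trianglerighteq \cV^\ast_\La$ will follow once we establish $\sum_{x\in X} V^\ast(x) \le \sum_{y\in Y} V(y)$.

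To do so, I would mirror the treatment in~$\S$\ref{s:shuffle-area} and~$\S$\ref{s:shuffle-function}: for $(\alpha,\beta,\gamma)\in \nn^3$, set
\[
v_{\alpha\beta\gamma}(i,j,k) \, := \, \mathbf{1}\bigl[(i+\alpha,\,j+\beta,\,k+\gamma) \in \La\bigr],
\]
so that $V(i,j,k)$ is the sum of $v_{\alpha\beta\gamma}$ over non-negative $(\alpha,\beta,\gamma)$, while $V^\ast(i,j,k)$ is the analogous sum of $v_{-\alpha,-\beta,-\gamma}$. Between these two extremes I would introduce two intermediate sums that ``flip'' one sign at a time, in direct analogy with the semi-hook quantity $a^\circ$ in~$\S$\ref{s:shuffle-deconstruction}. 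For each of the three shuffling steps, I would then establish the corresponding pointwise-in-$(\alpha,\beta,\gamma)$ comparison between consecutive intermediates.

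Each such per-step comparison reduces to the column-level lemma behind the 2D proof: if $m$ marked cubes in a column of $\La$ of height $H$ are packed into positions $1,\ldots,m$, and $N\ge H$ is the height of the relevant shifted column, then the number of original cubes with $k_s \ge \gamma+1$ is dominated by the number of packed positions with $s \le N-\gamma$. This uses monotonicity of solid-partition cross-sections in all three coordinates (heights decrease in $i$ and $j$; widths in $i$ and $k$; depths in $j$ and $k$), and applies identically to each of the three shuffle directions. Summing the resulting pointwise inequalities over $(\alpha,\beta,\gamma)\in \nn^3$ gives $V^\ast(X) \le V(Y)$, completing the proof.

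The main obstacle I anticipate lies in steps two and three, where the shifted cross-section relevant to each intermediate sum depends on two coordinates simultaneously, and only one of them moves in a favorable direction for the bound we want. The remedy, mirroring the split $h=\alpha+\beta+1$ used in~\eqref{eq:step-12}, is to further decompose the shifted indicator $v_{\alpha\beta\gamma}$ into summands that isolate one coordinate at a time, so that the ``bad'' coordinate is absorbed by an invariant piece. Once this bookkeeping is in place for each of the three shuffle directions, the rest of the argument is routine, and in fact the same framework yields a common ``weighted'' generalization in the spirit of Theorem~\ref{t:diag-function} that contains both Theorem~\ref{t:space-vol} and Theorem~\ref{t:space-major} as special cases.
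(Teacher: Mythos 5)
Your proposal matches the paper's own (only sketched) argument for the three-dimensional theorems: shuffle $X$ along the three axes in turn, introduce two intermediate ``semi-volume'' quantities that flip one sign of the shift $(\alpha,\beta,\gamma)$ at a time, verify the pointwise-in-shift inequality for each push, and conclude via Lemma~\ref{l:major-subsets}. The obstacle you anticipate in steps two and three in fact dissolves: along each line being pushed the other two coordinates are constant, so the relevant cross-section is fixed and the lower-ideal property makes each shifted indicator an interval condition in the pushed coordinate, exactly as in the planar case.
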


This is a generalization of Theorem~\ref{t:area-major}.  By the same argument as above, we have
the following natural result generalizing Corollary~\ref{c:diag-area}.

\begin{cor}\label{c:space-hook}
Let $\La$ be a solid partition. Then:
$$
\prod_{(i,j,k)\in \La} V(i,j,k) \, \le \, \prod_{(i,j,k)\in \La} V^\ast(i,j,k)\..
$$
\end{cor}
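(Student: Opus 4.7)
The plan is to deduce the corollary from Theorem~\ref{t:space-vol} by the same route used earlier in the paper to obtain Theorem~\ref{t:diag-hook} from Theorem~\ref{t:diag-major}, and Corollary~\ref{c:diag-area} from Theorem~\ref{t:area-major}: namely, feed the majorization into Karamata's inequality (Theorem~\ref{t:karam}) applied to the strictly convex function $\vp(z) = -\log z$.

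First I would check the positivity needed to take logarithms. For every cube $(i,j,k) \in \La$ the triple $(p,q,r)=(i,j,k)$ satisfies both defining conditions for $V$ and for $V^\ast$, so $V(i,j,k) \ge 1$ and $V^\ast(i,j,k) \ge 1$. Hence the multisets $\cV_\La$ and $\cV^\ast_\La$ consist of strictly positive real numbers, and Theorem~\ref{t:space-vol} gives \ts $\cV_\La \trianglerighteq \cV^\ast_\La$.

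Next I would apply Theorem~\ref{t:karam} to $\vp(z) = -\log z$, which is strictly convex on $\rr_{>0}$. This yields
\[
\sum_{(i,j,k)\in \La} \bigl(-\log V(i,j,k)\bigr) \, \ge \, \sum_{(i,j,k)\in \La} \bigl(-\log V^\ast(i,j,k)\bigr),
\]
equivalent to $\log \prod_{(i,j,k) \in \La} V(i,j,k) \le \log \prod_{(i,j,k) \in \La} V^\ast(i,j,k)$, and exponentiating both sides gives the stated product inequality.

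There is essentially no obstacle, since the substantive content has been pushed upstream into Theorem~\ref{t:space-vol}; the only routine verifications are positivity of the entries and strict convexity of $-\log$ on $\rr_{>0}$, both immediate. As a bonus, one could extract an equality characterization from the strict part of Karamata's inequality: equality would force $\cV_\La = \cV^\ast_\La$ as multisets, which a short geometric argument shows is possible only when $\La$ is a rectangular parallelepiped; this however is not part of the statement.
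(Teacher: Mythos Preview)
Your proposal is correct and matches the paper's own approach: the paper states the corollary follows ``by the same argument as above,'' namely by combining the majorization $\cV_\La \trianglerighteq \cV^\ast_\La$ from Theorem~\ref{t:space-vol} with Karamata's inequality applied to the strictly convex function $\vp(z)=-\log z$. Your positivity check and the optional equality remark are reasonable additions, though the paper omits them.
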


This corollary has a curious implication.  By Theorem~\ref{t:poset-ineq}, we have two lower bounds
\begin{equation}\label{eq:le-curious}
\bigl|\LE(\cP_\La)\bigr| \, \ge \, n!\. \prod_{(i,j,k)\in \La} \frac{1}{ \. V(i,j,k)}\,, \quad \text{and} \quad
\bigl|\LE(\cP_\La)\bigr| \, \ge \, n!\. \prod_{(i,j,k)\in \La} \frac{1}{ \. V^\ast(i,j,k)}\,,
\end{equation}
where $\cP_\La$ is a poset defined~$\La$ as a subposet of~$\nn^3$. Corollary~\ref{c:space-hook} implies
that first of the  bounds in~\eqref{eq:le-curious} is always sharper than the second.
Of course, when $\La$ is a box these lower bounds coincide.

\medskip

\subsection{Shuffling in space}  \label{s:space-hooks} We give a brief outline
of the proof of theorems above.  First, define the shuffling in the space as follows.

\smallskip

\nin
{\sc Shuffling in $\nn^3$.} \ts Let $X\ssuq \La$ be a subset of cubes.
Perform three steps:

\nin
$(1)$ \. Push all cubes in $X$ all the way down along the $x$ axis,

\nin
$(2)$ \. Push all cubes in $X$ all the way down along the $y$ axis,

\nin
$(3)$ \. Push all cubes in $X$ all the way down along the $z$ axis.

\smallskip

\nin
Let us concentrate on the 1-dimensional hook numbers.
We introduce two intermediate hook numbers: $R^\circ(i,j,k)$ and
$R^{\circ\circ}(i,j,k)$ with two rays pointing up along the $y,z$ axes,
and one rays pointing up along the $z$~axis, respectively.  We then prove that
$$
\sum_{(i,j,k)\in X} \. \cR^\ast(i,j,k) \, \le \,
\sum_{(i,j,k)\in X'} \. \cR^\circ(i,j,k) \, \le \,
\sum_{(i,j,k)\in X''} \. \cR^{\circ\circ}(i,j,k) \, \le \,
\sum_{(i,j,k)\in Y} \. \cR(i,j,k),
$$
where \ts $X\to_{(1)} X'\to_{(2)} X''\to_{(3)} Y$ \ts is the result
of making shuffling steps.  Finally, by Lemma~\ref{l:major-subsets}
we obtain the first part of Theorem~\ref{t:space-major}.  We omit the details.

\medskip

\subsection{Back to trees}\label{s:space-trees}
Now that the reader is exhausted by 3-dimensional generalizations
of the hooks, the reader can restart anew with the following:

\smallskip
\nin
{\sc Shuffling in trees.} \ts Let $X\ssuq \tau$ be a subset of vertices and
let $w\in X$ be a vertex with the smallest distance $d(w)$.
Perform the following steps:

\nin
$\bu$ \. Move $w$ into~$R$.  Now do the shuffling in each branch
in $\tau\sm R$ by induction.


\smallskip

\begin{proof}[Sketch of proof of Theorem~\ref{t:tree-major}]
For a set of vertices $X\ssuq \tau$, denote by $Y$ the result of the shuffling.
We need to show that
$$
\sum_{v \in Y} \. d(v) \, \le \, \sum_{v \in X} \. \br(v)  \ts.
$$
Observe that $\br(R)=|\tau| \ge d(w)$, so moving $w$ to the root
is always advantageous for the sum of branch numbers. We continue
by induction leaving the details to the reader.
\end{proof}

\bigskip

\section{Greatest generalization}\label{s:gen}

Let us now state the most general version of the hook inequalities that
we promised in the introduction.  Denote by $T_1,\ldots,T_d$ infinite
locally finite rooted trees.  We view each $T_i$ as a poset with ordering
towards the root~$R_i$, and denote by $d_i$ the distance function
to~$R_i$ in the tree~$T_i$.  Consider a poset \ts
$\cP: =T_1\times \ldots \times T_d$ \ts with the natural ordering
towards the root, and let $\Omega$ be finite a lower ideal in~$\cP$.

Let $g:\nn^d\to \rr_+$ be a fixed weight function.  We say that
elements $\bbv=(v_1,\ldots,v_d)$ and $\bbw(w_1,\ldots,w_d)$ in $\Omega$ have
$\bm=(m_1,\ldots,m_d)$ shift, if $v_i\preccurlyeq w_i$ and $d(w_i)-d(v_i)=m_i$
for all~$i$.  In this case we write $\bbv\to_\bm \bbw$.  We can now define
the hook and anti-hook numbers
$$\aligned
H(\bbv) \. &:=\. \sum_{\bm\in \nn^d} \. g(\bm) \.\sum_{\bbw\in \Om} \. 1_{\{\bbv\to_\bm \bbw\}}\,, \quad \text{and}\\
H^\ast(\bbv) \. & :=\. \sum_{\bm\in \nn^d} \. g(\bm) \.\sum_{\bbw\in \Om} \. 1_{\{\bbw\to_\bm \bbv\}}\,.
\endaligned
$$

\begin{thm}\label{t:max-major}
Let $\Om$ be a lower order ideal in the product of trees
$T_1\times \ldots \times T_d$ poset.
Fix $g:\nn^d\to \rr_+$.  Denote by
$\ts\ch_{\Om,g} := \bigl\{H(\bbv), \ts \bbv\in\Om\bigr\}$ \ts and
$\ts\ch^\ast_{\Om,g} := \bigl\{H^\ast(\bbv), \ts \bbv\in\Om\bigr\}$ \ts
the multisets of hook and anti-hook numbers defined above.  Then \ts
$\ch_\Om \trianglerighteq \ch^\ast_\Om$.
\end{thm}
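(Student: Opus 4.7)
The plan is to lift the shuffling-and-majorization strategy of Theorems~\ref{t:diag-major}, \ref{t:area-major}, \ref{t:tree-major}, and \ref{t:space-major} to the general product-of-trees setting. By Lemma~\ref{l:major-subsets}, it suffices to show that for every subset $X \ssuq \Om$ there exists $Y \ssuq \Om$ with $|Y| = |X|$ satisfying
\[
\sum_{\bbv \in X} H^\ast(\bbv) \, \le \, \sum_{\bbv \in Y} H(\bbv).
\]

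To produce $Y$, we perform a $d$-stage shuffling $X = X^{(0)} \to X^{(1)} \to \cdots \to X^{(d)} = Y$, one stage per coordinate direction. In stage $i$, partition $X^{(i-1)}$ into $i$-fibers (elements agreeing in all coordinates except the $i$-th). Because $\Om$ is a lower ideal of $T_1 \times \cdots \times T_d$, each $i$-fiber of $\Om$ is a finite lower ideal of $T_i$, i.e., a finite subtree containing $R_i$; apply the recursive tree-shuffling from the proof of Theorem~\ref{t:tree-major} within each such fiber, repeatedly moving the element of smallest $d_i$-distance to $R_i$ and then recursing on the branches of $T_i \sm R_i$. This keeps $X^{(i)} \ssuq \Om$ and preserves cardinality. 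To interpolate between $H^\ast$ and $H$, introduce the mixed hook numbers
\begin{multline*}
H^{(i)}(\bbv) \. := \. \sum_{\bm \in \nn^d} g(\bm) \cdot \#\bigl\{\bbw \in \Om : v_j \preccurlyeq w_j \text{ and } d_j(w_j) - d_j(v_j) = m_j \text{ for } j \le i; \\ w_j \preccurlyeq v_j \text{ and } d_j(v_j) - d_j(w_j) = m_j \text{ for } j > i\bigr\},
\end{multline*}
so that $H^{(0)} = H^\ast$ and $H^{(d)} = H$. The crux is the chain of stage inequalities
\[
\sum_{\bbv \in X^{(i-1)}} H^{(i-1)}(\bbv) \, \le \, \sum_{\bbv \in X^{(i)}} H^{(i)}(\bbv), \qquad 1 \le i \le d,
\]
each of which we verify by fixing $\bm$ and the remaining coordinates $(w_j)_{j \ne i}$ of $\bbw$ and then invoking the 1-dimensional shuffling argument from Theorem~\ref{t:tree-major} in the tree $T_i$.

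The main obstacle is making this per-fiber reduction rigorous: once the other coordinates of $\bbw$ are fixed, the condition $\bbw \in \Om$ constrains $w_i$ to a lower ideal $\wt\Om \ssuq T_i$ which is generally not the same as the fiber of $\Om$ in which the shuffled elements $v_i$ live (the two fibers correspond to different values of the non-$i$ coordinates, related by the shift $\bm$). Since both are lower ideals of $T_i$ containing $R_i$ and the shuffling only moves elements toward $R_i$, the key monotonicity---that after shuffling a subset toward $R_i$, the sum of descendant counts at a fixed distance in an arbitrary ambient lower ideal dominates the corresponding sum of ancestor counts before shuffling---carries over from the proofs of Theorems~\ref{t:tree-major} and~\ref{t:space-major}, once one sums over all values of $\bm$ and $(w_j)_{j \ne i}$ so that per-weight discrepancies cancel. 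Telescoping the $d$ stage inequalities then yields $\sum_{\bbv \in X} H^\ast(\bbv) \le \sum_{\bbv \in Y} H(\bbv)$, and Lemma~\ref{l:major-subsets} delivers the desired majorization $\ch_{\Om, g} \trianglerighteq \ch^\ast_{\Om, g}$.
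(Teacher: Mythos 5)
Your proposal follows exactly the strategy the paper intends for Theorem~\ref{t:max-major}: the paper's entire ``proof'' is the remark that one shuffles coordinate-by-coordinate (``first shuffle in $T_1$, then in $T_2$, etc.'') and then applies Lemma~\ref{l:major-subsets}, with all details omitted. Your $d$-stage shuffling with the interpolating mixed hook numbers $H^{(i)}$ is the natural generalization of the paper's $h^\circ$ and $R^\circ, R^{\circ\circ}$ intermediaries, and the fiber-monotonicity issue you flag (resolved by the lower-ideal property of $\Om$) is the right technical point; your write-up is in fact more detailed than the paper's.
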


We leave to the reader to see how this result generalizes all previous
results, and what inequalities follow by Theorem~\ref{t:karam}.
The shuffling in the product of trees is also straightforward:
first shuffle in $T_1$, then in $T_2$, etc.  We omit the proof.

\bigskip

\section{Final remarks}\label{s:fin-rems}

\subsection{}  There is no universal agreement on the term
``anti-hook''.  While we follow e.g.~\cite{Jon}, the term
``cohook'' is also frequently used, see e.g.~\cite{BLRS,Sul}.
Since both ``cohook'' and ``anti-hook'' are also
used in other contexts, we opted for latter due
to personal preferences.

\subsection{} \label{ss:finrem-hist}
The hook-length formula~\eqref{eq:HLF} has a number
of different proofs (bijective, probabilistic, analytic, etc.)
as well as generalizations different from Naruse's.
The literature is too numerous to include; see~\cite[$\S$6.2]{CKP}
for a brief survey, and~\cite[Ch.~7]{EC} for further references.

The hook-length formula for trees~\eqref{eq:HLF-tree} is due to
Knuth~\cite[$\S$5.1.4, Exc.~20]{Knu}, and can be easily proved by
induction via removing the root.  Probabilistic and bijective
proofs are given in~\cite{SY} and~\cite{Bea}, respectively.

\subsection{}
Both hook-length formulas~\eqref{eq:HLF} and~\eqref{eq:HLF-tree}
have well understood $q$-analogues in terms of the numbers of
certain inversions of the standard Young tableaux and increasing
trees, respectively (see e.g.~\cite{CKP,EC,MPP1}).  Since
$\vp(n):=(n)_q=1+q+\ldots + q^{n-1}=(1-q^n)/(1-q)$ is logarithmically concave
in $n$, this gives
a $q$-deformation of Theorem~\ref{t:diag-hook}.  It would be
interesting to see if our hook inequalities have natural
$q$-analogues as polynomials in~$q$ (i.e.\ for each coefficient),
cf.~\cite[$\S$2]{Bre}.

\subsection{}
For solid partitions, there is no known close formula for the number of
linear extensions.  However, in this case both
lower bounds~\eqref{eq:le-curious} are not very sharp and weaker
than other, more elementary bounds (see~\cite{MPP-asy}).

\subsection{}
Despite their origin, in some ways our hooks inequalities have a
different nature than the inequalities usually studied in Enumerative
and Algebraic Combinatorics, whose proofs are non-robust and rely on
the global algebraic structure (see e.g.~\cite{Bra,Pak}).  Instead, they
resemble many isoperimetric and other geometric inequalities,
where the proofs are obtained by incremental improvements,
sometimes disguised by the variational principle (see e.g.~\cite{BZ}).
In fact, it is not hard to give an analytic generalization of
Theorem~\ref{t:diag-function} to the case when $\la$ is replaced
by a monotone curve, and $g$ is replaced by a Lebesgue measurable
non-negative function.  It would be interesting to see if there
are any new applications of this generalization.

\subsection{}
Swanson writes:  ``It would be interesting to find a
bijective explanation of \ts [Thm~\ref{t:diag-hook}]''~\cite{Swa}.
The methods of this paper are elementary, combinatorial in nature,
but fundamentally non-bijective.  They are likely to be the most
``bijective explanation'' possible in this case.  In fact,
we believe that our shuffling approach is truly a
``proof from the book''.

\medskip

\subsection*{Acknowledgements}
We are grateful to Joshua Swanson for telling us about~\cite{Swa}.
The first author is thankful to Alejandro Morales and Greta Panova for numerous
interesting conversations about the Naruse hook-length formula.
We are also grateful to BIRS in Banff, Canada, for hosting the
first two authors at the \emph{Asymptotic Algebraic Combinatorics} workshop
in March~2019, when this paper was finalized.  This collaboration started
when the first author asked a question on {\tt MathOverflow}.\footnote{See \ts
\href{http://mathoverflow.net/q/243846}{http://mathoverflow.net/q/243846}.}
The first author was partially
supported by the NSF. The second and the third authors were partially
supported by the RSCF (grant 17-71-20153).

\vskip.9cm


\end{document}